\newtheorem{prop}{Proposition}[section]
\newtheorem{rem}{Remark}[section]
\newtheorem{exmp}{Example}
\numberwithin{equation}{section}
\newcommand{\jl}{j-\frac 12}
\newcommand{\jr}{j+\frac 12}
\newcommand{\bfu}{\mathbf{U}}
\newcommand{\bfv}{\mathbf{v}}
\newcommand{\jpl}{[\![}
\newcommand{\jpr}{]\!]}
\begin{document}
\baselineskip 1.2pc

\title[OFDG for shallow water equations]{A well-balanced oscillation-free discontinuous Galerkin
method for shallow water equations}

\author{Yong Liu}
\address{LSEC, Institute of Computational Mathematics, Hua Loo-Keng Center for Mathematical Sciences,
Academy of Mathematics and Systems Science, Chinese Academy of Sciences, Beijing 100190, P.R. China.}
\thanks{Y. Liu's research is partially supported by the fellowship of China Postdoctoral Science Foundation
No. 2020TQ0343.}
\email{E-mail: yongliu@lsec.cc.ac.cn}

\author{Jianfang Lu}
\address{South China Research Center for Applied Mathematics
and Interdisciplinary Studies, South China Normal University, Canton,
Guangdong 510631, China.}
\thanks{J. Lu's research is partially supported by NSFC grant 11901213 and Guangdong Basic and
Applied Basic Research Foundation 2020B1515310021. }
\email{E-mail: jflu@m.scnu.edu.cn.}

\author{Qi Tao}
\address{Beijing Computational Science Research Center, Beijing 100193, China.}
\thanks{Q. Tao's research is supported in part by NSFC grants U1930402 and the fellowship of China Postdoctoral Science Foundation No. 2020TQ0030 }
\email{E-mail: taoqi@csrc.ac.cn}

\author{Yinhua Xia}
\address{University of Science and Technology of China, School of Mathematics, Hefei, Anhui 233026, P.R. China.}
\thanks{Y. Xia's research supported by the National Numerical Windtunnel Project NNW2019ZT4-B08
and a NSFC grant  No. 11871449.}
\email{E-mail: yhxia@ustc.edu.cn}

\subjclass[2010]{65M60}

\begin{abstract}
In this paper, we develop a well-balanced oscillation-free discontinuous Galerkin (OFDG)
method for solving the shallow water equations with a non-flat bottom topography.
One notable feature of the constructed scheme
is the well-balanced property, which preserves exactly the hydrostatic equilibrium solutions
up to machine error. Another feature is the non-oscillatory property, which
is very important in the numerical simulation when there exist some shock discontinuities.
To control the spurious oscillations, we construct an OFDG method with an extra damping term
to the existing well-balanced DG schemes proposed in \cite{XSCICP2006}.
With a careful construction of the damping term, the proposed method achieves both
the well-balanced property and non-oscillatory property simultaneously without compromising
any order of accuracy. We also present
a detailed procedure for the construction and a theoretical analysis for the preservation of
the well-balancedness property. Extensive numerical experiments including one- and two-dimensional space demonstrate that the proposed methods possess the desired properties without sacrificing any order of accuracy.
\end{abstract}

\keywords{Hyperbolic balance laws;
Oscillation-free discontinuous Galerkin method; Well-balanced scheme; Shallow water equations;}

\maketitle

\section{Introduction}
\label{sec_intro}

The nonlinear shallow water equations (SWEs) have wide applications in the modeling and
simulation of free surface flows in ocean and hydraulic engineering, including the dam break
and flooding problems, tidal flows in estuary and coastal water regions, etc.
They are also commonly used to
predict sea surface elevations and coastline changes due to hurricanes and ocean currents.
See e.g. \cite{BR2000, BSHG1995, GMFEM2019, Goutal1997, HCMK1999,
QCLP2018, XS2014} and the references therein.
The two-dimensional SWEs are reduced from the three-dimensional Navier-Stokes (NS) equations,
based on the fact that vertical length scale is much far less than the horizontal length scale
in many realistic situations such as the atmosphere and ocean. This allows us
to use SWEs instead of NS equations, for the reason that it could be very
expensive to simulate three-dimensional NS equations directly.

Since the SWEs are widely used in scientific research and engineering applications, it is very important to construct the robust and accurate numerical methods for solving the SWEs. There are two main numerical
difficulties in the computation of the SWEs. One is the preservation
of the well-balanced property. The traditional numerical methods may not
be able to balance the contribution of the source term and the flux gradient, and large numerical errors will occur on the coarse mesh or after a long time simulation.
A remedy to this difficulty is to use the refined mesh to reduce the numerical error,
which would tremendously increase the computational cost especially for the multidimensional
problems. Thus, it is very desirable to design the numerical schemes
that admit the equilibrium
solutions in which the flux gradient and the source term are exactly balanced,
and they are referred to as {\it well-balanced} schemes.
The main advantage of the well-balanced schemes is that they can be used to
resolve the small perturbations near the equilibrium state solution very precisely without
an excessively refined mesh. The well-balanced property is also referred to as the {\it  exact C-property},
which was first introduced by Bermudez and Vazquez in \cite{BV1994}. Since then, many
well-balanced numerical methods are constructed and studied in the framework of
the finite difference (FD) methods, finite volume (FV) methods and discontinuous
Galerkin (DG) methods, see e.g. \cite{ABBKP2004, BLMR2002, FMT2011, LeVeque1998, PS2001, XSJCP2005, XSJSC2006, ZCMI2001} and the references therein. We also refer to
the review paper \cite{XS2014} for a complete list of literatures on this topic.
Another difficulty is robustness of the numerical methods near the wet/dry front. Since the
SWEs are defined on the wet region only, then we need to deal with problems of moving boundaries. One feasible approach is to use the boundary-fitted mesh to track the
front \cite{Bokhove2005}. While
a more popular method is the thin layer technique, which maintains a very thin layer in the
dry region so that the SWEs are also defined on it. Then the difficulty is converted into
 the positivity-preserving of the water heights during the simulation. There exist
a vast amount of the positivity-preserving FV schemes and DG schemes,
see e.g. \cite{ABBKP2004, BEKP2011, BKWD2009, CMP2007, KL2012}.
Based on the approach developed in \cite{ZS2010_scalar, ZS2010_system},
Xing et al. constructed the positivity-preserving FD
schemes, FV schemes and DG schemes for the SWEs without
destroying the high order accuracy, conservation and well-balanced property
\cite{XS2011, XZ2013, XZS2010}. Very
recently, Wen et al. in \cite{WDGX2020} developed an entropy stable and positivity-preserving
well-balanced DG method to compute the SWEs.

In this work, we propose a high order well-balanced oscillation-free DG (OFDG) method for solving the SWEs.
The constructed numerical method is based on the well-balanced DG schemes proposed in
\cite{XSCICP2006}, which only used the simple source term approximation but a careful
construction of the numerical fluxes employing the idea of hydrostatic reconstruction
in \cite{ABBKP2004}. To treat the wet/dry front, we adopt the OFDG
method developed in \cite{LLSSISC2021, LLSSINUM2021}.
The OFDG method can not only control the spurious oscillations, but also
maintain the high order accuracy, conservation and superconvergence properties.
In this paper, we give a simple analysis of maintaining the
well-balanced property, which indicates that the original OFDG method is consistent with
this property well. We test many benchmark problems and obtain the satisfactory numerical results.
 This strongly demonstrates the effectiveness and robustness of our method.

The organization of this paper is  as follows. In Section \ref{sec_1d_swe}, we consider the
one-dimensional SWEs and construct the corresponding well-balanced OFDG schemes.
A semi-discrete analysis of preserving the well-balanced property is also given.
In Section \ref{sec_md_swe}, we extend the one-dimensional results to the two-dimensional SWEs.
We conduct a numerical investigation of the proposed
algorithm, including the accuracy tests and well-balanced property preserving in Section \ref{sec_numeric}.
Some concluding remarks are given in Section \ref{sec_sum}.

\section{One-dimensional well-balanced OFDG schemes}
\label{sec_1d_swe}

In this section, we consider the one-dimensional shallow water equation given as follows:
\begin{align} \label{SWE_1d}
\left\{\begin{aligned}
& h_t+(hu)_x=0, \\
& (hu)_t+\Big(hu^2+\frac 12 g h^2 \Big)_x = -g hb_x,
\end{aligned} \right.
\end{align}
where $u$ is the velocity of the fluid, $h$ denotes the water height, $b(x)$ represents
the bottom topography and $g$ is the gravitational constant.
This model admits steady state solutions, in which the flux gradient is exactly
balanced by the source term. In particular, people are interested in the still water
stationary solutions, which are given by
\begin{align}\label{stat_eq_1d}
hu = 0,\quad h + b =\text{constant}.
\end{align}
First, we assume that the discretization of  the computational
domain is given by cells $I_{j}=[x_{\jl},x_{\jr}],~j=1,\cdots, N$. We denote the cell length as $\Delta x_{j}=x_{\jr}-x_{\jl},$ and $\Delta x
=\max\limits_j\Delta x_{j}.$
The finite element space $V_h^k$ is defined as follows:
\begin{align}
V_h^k:=\{v: v|_{I_j}\in P^k(I_j), \, j=1,\cdots, N\},
\end{align}
where $P^k(I_j)$ denotes the polynomials with degree at most $k$ in $I_j$ .
Our goal is to construct the well-balanced OFDG scheme for \eqref{SWE_1d}. The idea of the OFDG method in \cite{LLSSISC2021, LLSSINUM2021}
is to add the suitable numerical damping terms to the conventional DG schemes in order
to control the spurious oscillations. Therefore, to construct the well-balanced OFDG schemes
we also add the numerical damping terms to the well-balanced DG schemes proposed in
\cite{XSCICP2006}. Then, we can define the semi-discrete well-balanced OFDG scheme for \eqref{SWE_1d}:
Find $\bfu_h(\cdot,t)\in [V_h^k]^2$ such that for any $\bfv_h \in [V_h^k]^2$ we have
\begin{equation}
\label{wbofdg_scheme}
\begin{split}
\int_{I_j}(\bfu_h)_t\cdot \bfv_h \, dx = &\int_{I_j} \mathbf{F}(\bfu_h)\cdot (\bfv_h)_x \, dx
-\widehat{\mathbf{F}}^l_{\jr}\cdot(\bfv_h)_{\jr}^- + \widehat{\mathbf{F}}^r_{\jl}\cdot(\bfv_h)_{\jl}^+ \\
 +\int_{I_j}&\mathbf{S}(\bfu_h,b_h) \cdot \bfv_h \, dx- \sum_{\ell=0}^{k}
 \frac{\sigma_j^\ell(\bfu_h)}{\Delta x_j}\int_{I_j}(\widetilde{\bfu}_h-P_h^{\ell-1}
 \widetilde{\bfu}_h)\cdot\bfv_h \, dx,
\end{split}
\end{equation}
where $\bfu_h = (h_h, (hu)_h)^T$ is the approximation of the unknown $\bfu = (h, hu)^T$, $b_h(x)$ is the $L^2$ projection of function $b(x)$ into $V_h^k$, $\mathbf{F}(\bfu) = \big(hu, hu^2 + gh^2/2\big)^T$ is the flux function, and $\mathbf{S}(\bfu,b) = (0,-ghb_x)^T$ is the source term.
The left numerical flux $\widehat{\mathbf{F}}^l_{\jr}$ and the right numerical flux $\widehat{\mathbf{F}}^r_{\jl}$
are defined in \cite{XSCICP2006}.
The choices of the numerical fluxes are crucial to obtain the
well-balanced property. There are two choices to define the left and right fluxes in \cite{XSCICP2006}.
In this paper, we consider {\em Choice B} in \cite{XSCICP2006}, that is, after computing boundary
values $\big( \bfu_h\big)_{\jr}^{\pm}$, we set
\begin{align} \label{eqn_bdy_value_1}
\big(h_h\big)_{\jr}^{\ast, \pm} = \max\Big(0,~\big(h_h + b_h \big)_{\jr}^\pm - \big(b_h\big)_{\jr}^\ast
\Big),\, \big(b_h \big)^\ast_{\jr} = \max\big( \big(b_h)_{\jr}^+, \big(b_h\big)_{\jr}^- \big) \, .
\end{align}
The left and right values of $\bfu_h$ are redefined as follows:
\begin{align} \label{eqn_bdy_value_2}
\big( \bfu_h\big)_{\jr}^{\ast,\pm} = \Big( \big(h_h \big)_{\jr}^{\ast,\pm}, \big( (hu)_h
\big)_{\jr}^{\pm} \Big)^T.
\end{align}
Then the left and right fluxes $\widehat{\mathbf{F}}^l_{\jr}$ and $\widehat{\mathbf{F}}^r_{\jl}$
are given by:
\begin{align} \label{eqn_flux_1d}
\begin{aligned}
&\widehat{\mathbf{F}}^l_{\jr} = \widehat{\mathbf{F}}\Big(\big( \bfu_h \big)_{\jr}^{\ast,-},
\big( \bfu_h \big)_{\jr}^{\ast,+}\Big)
+\Big(0,\, \frac{g}{2} \big( (h_h^2 )_{\jr}^- - ( h_h^2 )_{\jr}^{\ast,-} \big) \Big)^T, \\
&\widehat{\mathbf{F}}^r_{\jl} = \widehat{\mathbf{F}}\Big(\big( \bfu_h \big)_{\jl}^{\ast,-},
\big( \bfu_h \big)_{\jl}^{\ast,+} \Big)
+\Big(0,\, \frac{g}{2} \big( ( h_h^2 )_{\jl}^+ -( h_h^2)_{\jl}^{\ast,+}\big) \Big)^T \, .
\end{aligned}
\end{align}
Here, $\widehat{\mathbf{F}}$ is the numerical flux and a simply choice is the Lax-Friedrichs flux
(see e.g. \cite{Shu2009}). The last term of the right-hand side of \eqref{wbofdg_scheme} is
the artificial damping terms which were introduced in \cite{LLSSISC2021,LLSSINUM2021}.
$P_h^{\ell},\, \ell\geq 0$ is the standard $L^2$ projection for vector functions, and defined as follows: for $\forall\mathbf{w}$, $P_h^{\ell}\mathbf{w}\in [V_h^{\ell}]^2$ such that
\begin{align}
\int_{I_j}(P_h^{\ell}\mathbf{w} -\mathbf{w}) \cdot \bfv_h \, dx =0, \quad \forall \,
\bfv_h \in [P^{\ell}(I_j)]^{2}.
\end{align}
Here, we define $P_h^{-1}=P_h^{0}$ and follow the idea in \cite{LLSSISC2021}, the damping
coefficients $\sigma_j^{\ell} \geq 0$ are given by
\begin{align} \label{eqn_damping_coef_1d}
\sigma_j^{\ell} = \frac{2(2\ell+1)}{2k-1} \frac{(\Delta x)^{\ell}}{\ell !} \max_{s=1,2}
\left(\jpl \partial_{x}^{\ell} V_s\jpr _{\jl}^2+\jpl \partial_{x}^{\ell} V_s\jpr_{\jr}^2\right)^{\frac 12},
\end{align}
where $\jpl v \jpr_{\jr}=v\big(x_{\jr}^+ \big) - v\big(x_{\jr}^- \big)$ denotes the jump of
$v$ at $x=x_{\jr}$. The variables $\partial_x^{\ell}\mathbf{V} = \big(\partial_x^\ell V_1,
\partial_x^{\ell} V_2\big)^T$ are given by $\partial_x^{\ell}\mathbf{V}=\mathbf{R}^{-1}
\partial_x^{\ell}\bfu_h$, $0 \leq \ell \leq k$, and $\mathbf{R}^{-1}$ is the matrix derived from
the characteristic decomposition $\mathbf{F}'(\overline{\bfu_h}_{\jr}) = \mathbf{R}\Lambda
\mathbf{R}^{-1}$, and $(\overline{~\cdot~})_{\jr}$
stands for some average on $x_{\jr}$, such as the arithmetic mean or the Roe average.
For one-dimensional shallow water equations, we have $\mathbf{R}^{-1}$ defined as
\begin{align}
\mathbf{R}^{-1}=
\begin{pmatrix}
c+u & -1\\
c-u & 1
\end{pmatrix},
\end{align}
where $c = \sqrt{gh}$. We apply the extra damping terms to the variables $\widetilde{\bfu}_h$ instead of $\bfu_h$ to guarantee the well-balanced property, where $\widetilde{\bfu}_h$ is defined as:
\begin{align}
\widetilde{\bfu}_h = (h_h + b_h, (hu)_h)^T.
\end{align}

\begin{prop}
The OFDG scheme defined in \eqref{wbofdg_scheme} is well-balanced for still water
stationary state \eqref{stat_eq_1d} for shallow water equations.
\end{prop}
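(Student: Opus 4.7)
The plan is to decompose the right-hand side of \eqref{wbofdg_scheme} into the \emph{conventional} part (volume flux, boundary fluxes, and source) and the \emph{damping} part (the $\sum_\ell$ sum), and argue that each piece vanishes at the still water steady state \eqref{stat_eq_1d}. The conventional part is exactly the well-balanced scheme of \cite{XSCICP2006} with Choice B for the numerical flux, so I would cite that reference to dispose of it immediately: the hydrostatic reconstruction \eqref{eqn_bdy_value_1}--\eqref{eqn_bdy_value_2} together with the correction in \eqref{eqn_flux_1d} is precisely engineered so that the flux divergence balances the source integral once $(hu)_h\equiv 0$ and $h_h+b_h$ is constant. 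No new calculation is needed here.

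The only new ingredient to verify is that the artificial damping terms also vanish at the steady state. First I would observe that if the initial data are the $L^2$ projections of $h$ and $b$, then by linearity of $L^2$ projection and the fact that constants belong to $V_h^k$, we have $h_h+b_h = P_h^k(h+b) = P_h^k(C) = C$ globally. Hence
\begin{equation*}
\widetilde{\bfu}_h = (h_h+b_h,\,(hu)_h)^T = (C,\,0)^T
\end{equation*}
is a (piecewise) constant vector function on the whole mesh. Since constants lie in every $V_h^{\ell-1}$ (recall $P_h^{-1}=P_h^0$), we have $P_h^{\ell-1}\widetilde{\bfu}_h = \widetilde{\bfu}_h$ for all $0\le \ell\le k$, so that $\widetilde{\bfu}_h - P_h^{\ell-1}\widetilde{\bfu}_h \equiv 0$. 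Consequently each integral in the damping sum is zero irrespective of the size of $\sigma_j^{\ell}$, and the damping contribution to \eqref{wbofdg_scheme} is zero for any test function $\bfv_h$.

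Putting the two parts together yields $\int_{I_j}(\bfu_h)_t\cdot \bfv_h\,dx = 0$ for every $\bfv_h\in [V_h^k]^2$, so $(\bfu_h)_t\equiv 0$ and the discrete steady state is preserved. I do not anticipate a real obstacle: the argument is essentially a design check. The only delicate point is to make sure the reader sees why the scheme uses $\widetilde{\bfu}_h$ rather than $\bfu_h$ inside the damping, namely because $h_h = C-b_h$ is not itself a polynomial of degree $\le \ell-1$ on each element (it inherits the variation of $b_h$), so damping $\bfu_h$ directly would destroy well-balancedness, whereas damping the equilibrium variable $\widetilde{\bfu}_h$ does not. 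I would highlight this design choice as the conceptual content of the proposition.
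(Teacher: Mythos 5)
Your proposal is correct and follows essentially the same route as the paper: cite the well-balancedness of the underlying scheme from \cite{XSCICP2006} for the flux/source terms, then observe that $\widetilde{\bfu}_h=(C,0)^T$ is reproduced exactly by every projection $P_h^{\ell-1}$, so each damping integral vanishes. The extra remarks you add (why $h_h+b_h$ is exactly constant at the discrete level via linearity of the $L^2$ projection, and why damping $\widetilde{\bfu}_h$ rather than $\bfu_h$ is essential) are consistent with, and slightly more explicit than, the paper's argument.
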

\begin{proof}
We define the residual
\begin{equation}
\begin{split}
Res = &\int_{I_j} \mathbf{F}(\bfu_h)\cdot (\bfv_h)_x \, dx -\widehat{\mathbf{F}}^l_{\jr}\cdot(\bfv_h)_{\jr}^- + \widehat{\mathbf{F}}^r_{\jl}\cdot(\bfv_h)_{\jl}^+ \\
& +\int_{I_j}\mathbf{S}(\bfu_h,b_h) \cdot \bfv_h \, dx- \sum_{\ell=0}^{k}\frac{\sigma_j^\ell(\bfu_h)}{\Delta x_j}\int_{I_j}(\widetilde{\bfu}_h-P_h^{\ell-1}\widetilde{\bfu}_h)\cdot\bfv_h \, dx
\end{split}
\end{equation}
From Proposition 3.1 in \cite{XSCICP2006}, the residual $Res$ without the extra damping term in \eqref{wbofdg_scheme} for still water would reduce to zero. Thus, we only need to show the damping
term vanishes for still water. It is easy to see when $h_h+b_h = constant$ and $(hu)_h=0$,
i.e. $\widetilde{\bfu}_h = (constant,0)^T$, we have
\begin{align}
\int_{I_j}(\widetilde{\bfu}_h-P_h^{\ell-1}\widetilde{\bfu}_h)\cdot\bfv_h \, dx = 0, \quad
\forall \, \bfv_h \in [P^{k}(I_j)]^2.
\end{align}
Therefore, the residual $Res$ in \eqref{wbofdg_scheme} is zero for the still water state \eqref{stat_eq_1d}.
\end{proof}

\begin{rem}
In \cite{XSJCP2006}, Xing and Shu proposed a well-balanced DG scheme for the SWEs. They
decomposed the source term into a sum of three terms to achieve well-balanced property.
We can also follow the above procedure similarly to design the well-balanced OFDG scheme
based on the DG scheme in \cite{XSJCP2006}. Numerically this treatment makes little
difference comparing to the one in \eqref{wbofdg_scheme}, thus we only focus ourselves
on the scheme \eqref{wbofdg_scheme} throughout this paper.
\end{rem}

\section{Two dimensional well-balanced OFDG schemes}
\label{sec_md_swe}

This section will extend the one-dimensional well-balanced OFDG scheme
\eqref{wbofdg_scheme} to the two dimensional space.
Now, we consider the two-dimensional shallow water equations:
\begin{align}
\label{SWE_2d}
\left\{\begin{aligned}
& h_t+(hu)_x+(hv)_y = 0 \, ,\\
& (hu)_t+\Big( hu^2+\frac 12 g h^2 \Big)_x+(huv)_y = -ghb_x \, ,
\quad (x,y) \in \Omega,~t\in (0,T],\\
& (hv)_t +(huv)_x + \Big( hv^2+\frac 12 g h^2 \Big)_y = -ghb_y \, ,
\end{aligned}\right.
\end{align}
where $h$ is the water height, $(u, v)$ is the velocity of the fluid,
$b(x, y)$ represents the bottom topography and $g$ is the gravitational constant. The still water
stationary solutions are given by
\begin{align} \label{stat_eq_2d}
hu =  hv = 0, \quad h + b = \text{constant}.
\end{align}
To obtain the OFDG scheme, firstly, we assume that a regular partition $\mathcal{T}_h$
of $\Omega$ is given. For each element $K\in\mathcal{T}_h$, $\Delta_K = \text{diam} K$,
$\Delta_{\mathcal{T}_h} = \max\limits_{K\in\mathcal{T}_h} \Delta_K$.
Then, the OFDG scheme for \eqref{SWE_2d} is defined as follows:
Seek $\bfu_h(\cdot,t)\in [V_h^k]^3$ such that  $\forall \, \bfv_h \in [V_h^k]^3$ we have
\begin{equation}
\begin{split}
\label{wbofdg_scheme_2d}
& \int_K (\bfu_h)_t \cdot \bfv_h \,dx dy \\
 =& \int_{K}\mathbf{F}(\bfu_h)\cdot (\bfv_h)_x +\mathbf{G}(\bfu_h)\cdot (\bfv_h)_y \, dx dy\\
& - \int_{\partial_K}(\widehat{\mathbf{F}}^{\partial_K}(\bfu_h)n_1
+ \widehat{\mathbf{G}}^{\partial_K}(\bfu_h)n_2)\cdot \bfv_h \, ds
+ \int_{K}\mathbf{S}(\bfu_h,b_h)\cdot \bfv_h \, dx dy \\
&  -\sum_{\ell=0}^{k}\frac{\sigma_{K}^\ell(\bfu_h)}{\Delta_K}
\int_{K}(\widetilde{\bfu}_h-P_h^{\ell-1}\widetilde{\bfu}_h)\cdot\bfv_h \, dx dy ,
\end{split}
\end{equation}
where $\bfu_h=(h_h,(hu)_h,(hv)_h)^T$, $\widetilde{\bfu}_h=(h_h+b_h,(hu)_h,(hv)_h)^T$,
$\mathbf{F}(\bfu)=(hu,hu^2+ gh^2/2,huv)^T$, $\mathbf{G}(\bfu)=(hv,huv,hv^2+ gh^2/2)^T$,
$\mathbf{S}(\bfu)=(0,-ghb_x,-ghb_y)^T$.
$\widehat{\mathbf{F}}^{\partial_K}$ and $\widehat{\mathbf{G}}^{\partial_K}$ are numerical fluxes
obtained by the same procedure as one-dimensional case in Section \ref{sec_1d_swe}.
$\mathbf{n}=(n_1,n_2)^T$ is the unit outward
normal with respect to $\partial_K$.
The damping coefficients are defined by
\begin{align}\label{damping_coef_md}
\sigma_{K}^\ell=\frac{2(2\ell+1)}{2k-1}\frac{(\Delta_{\mathcal{T}_h})^{\ell}}{\ell !}
\max_{1\leq s \leq 3}\sum_{|\bm{\alpha}|=\ell} \bigg(\frac{1}{N_e}\sum_{\bm{v}\in \partial_K}
\left(\jpl\partial^{\bm{\alpha}}V_s\jpr\Big|_{\bm{v}}\right)^2\bigg)^{\frac 12},
\end{align}
where $\bm{\alpha}=(\alpha_1,\alpha_2)$ is the multi-index notation with non-negative integers of the length
\begin{align}
|\bm{\alpha}|=\alpha_1+\alpha_2,
\end{align}
and $\partial^{\alpha}w=\partial_x^{\alpha_1}\partial_y^{\alpha_2} w$. We also use
the jump of the characteristic variables $\mathbf{V}=\mathbf{R}^{-1}\bfu$ to define the
$\sigma_K^\ell$. The matrix $\mathbf{R}$ is obtained from the characteristic decomposition
\begin{align}
n_1\mathbf{F}'(\overline{\bfu_h})+n_2\mathbf{G}'(\overline{\bfu_h})=\mathbf{R}
\bm{\Lambda}\mathbf{R}^{-1}
\end{align}
on the element interface, and $\overline{\bfu_h}$ is the mean average or Roe average on the element
interface. $[\![ w ]\!]\big|_{\bm{v}} $ denotes the jump of the function $w$ on the vertex $\bm{v} $.
 $N_e$ is the number of edges of the element $K$ and $\bm{v} \in \partial_{K}$ are the vertices of $K$.
For illustration purpose, we consider the two-dimensional case as follows.
\begin{figure}[!htb]
\centering
\caption{\label{fig_jump_in_sigma}  Graph for the illustration of the jumps in
$\sigma_K^\ell$ defined in \eqref{damping_coef_md}.} \bigskip\bigskip\bigskip
\includegraphics[width=0.5\textwidth, height = 0.3\textwidth]{./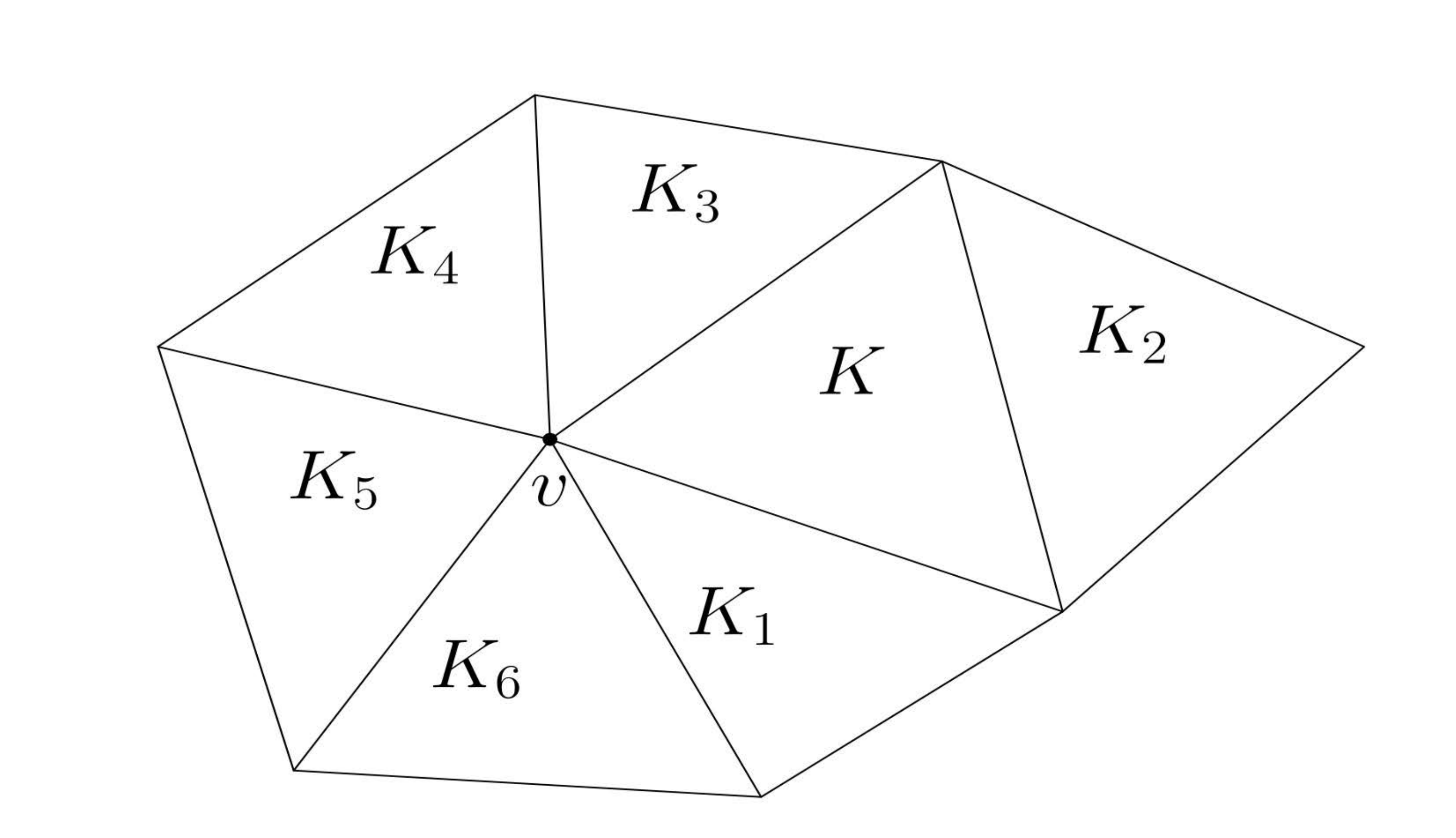}
\end{figure}

In Figure \ref{fig_jump_in_sigma}, we can see that $N_e=3$ for the element $K$ and $K_1, K_2, K_3$ are its adjacent neighbors, then we have:

\[
\Big( [\![ \partial^{\bm{\alpha}} w ]\!] \Big|_{\bm{v}} \Big)^2
= \big[ (\partial^{\bm{\alpha}} w\big|_K - \partial^{\bm{\alpha}} w\big|_{K_1})^2
+ (\partial^{\bm{\alpha}} w\big|_K - \partial^{\bm{\alpha}} w\big|_{K_3})^2 \big] \Big|_{\bm{v}}.
\]
For more details we refer the reader to \cite{LLSSINUM2021}.
Throughout this paper, we take $\mathbf{R}^{-1}$ as follows
\begin{align}
\mathbf{R}^{-1}=
\begin{pmatrix}
c+un_1+vn_2 & n_1 & n_2\\
2(u n_2 -v n_1) & -2n_2 & 2n_1 \\
c+un_1+vn_2& -n_1 & -n_2
\end{pmatrix},
\end{align}
where $c=\sqrt{gh}$.

\medskip

The well-balanced property of the scheme \eqref{wbofdg_scheme_2d} is as follows. The proof
 is similar to one-dimensional case, thus we omit it here.
\begin{prop}
The OFDG scheme \eqref{wbofdg_scheme_2d} preserves the well-balanced property for the
still water stationary
state \eqref{stat_eq_2d}.
\end{prop}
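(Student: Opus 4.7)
The plan is to mirror the one-dimensional argument almost verbatim, exploiting the fact that the two-dimensional scheme \eqref{wbofdg_scheme_2d} is built from the same ingredients, namely the hydrostatic-reconstruction fluxes of \cite{XSCICP2006} applied edge-by-edge, plus a damping term constructed from $\widetilde{\bfu}_h$. First I would introduce the residual
\begin{equation*}
\begin{split}
Res = & \int_{K}\mathbf{F}(\bfu_h)\cdot (\bfv_h)_x +\mathbf{G}(\bfu_h)\cdot (\bfv_h)_y \, dx dy
- \int_{\partial_K}(\widehat{\mathbf{F}}^{\partial_K}(\bfu_h)n_1 + \widehat{\mathbf{G}}^{\partial_K}(\bfu_h)n_2)\cdot \bfv_h \, ds \\
& + \int_{K}\mathbf{S}(\bfu_h,b_h)\cdot \bfv_h \, dx dy
-\sum_{\ell=0}^{k}\frac{\sigma_{K}^\ell(\bfu_h)}{\Delta_K}\int_{K}(\widetilde{\bfu}_h-P_h^{\ell-1}\widetilde{\bfu}_h)\cdot\bfv_h \, dx dy
\end{split}
\end{equation*}
and split it as $Res = Res_{\text{DG}} + Res_{\text{damp}}$, where $Res_{\text{DG}}$ collects the volume, surface, and source contributions and $Res_{\text{damp}}$ is the summed damping term.

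For $Res_{\text{DG}}$, I would invoke the two-dimensional analogue of Proposition 3.1 in \cite{XSCICP2006}: since the edge fluxes $\widehat{\mathbf{F}}^{\partial_K}$ and $\widehat{\mathbf{G}}^{\partial_K}$ are defined by applying the one-dimensional hydrostatic reconstruction \eqref{eqn_bdy_value_1}--\eqref{eqn_flux_1d} in the normal direction of each edge, and because the still-water state \eqref{stat_eq_2d} gives $\big(h_h+b_h\big)_{\partial K}^\pm$ continuous and $(hu)_h=(hv)_h\equiv 0$, the starred boundary values satisfy $\bfu_h^{\ast,\pm} = \bfu_h^{\pm}$, and the pressure correction in the edge flux exactly cancels the source-term projection $\int_K \mathbf{S}(\bfu_h,b_h)\cdot \bfv_h \, dx dy$ together with the $(1/2)g h_h^2$ contribution from the volume integral. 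This is exactly the computation already carried out for the one-dimensional case, applied edge by edge, so it extends componentwise without incident.

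For $Res_{\text{damp}}$, the point is that at the equilibrium \eqref{stat_eq_2d} we have $\widetilde{\bfu}_h = (h_h+b_h, (hu)_h, (hv)_h)^T = (\text{constant},0,0)^T \in [V_h^0]^3 \subset [V_h^{\ell-1}]^3$ for every $\ell\geq 0$ (recalling the convention $P_h^{-1}=P_h^0$). Hence $P_h^{\ell-1}\widetilde{\bfu}_h = \widetilde{\bfu}_h$ and each integrand $\widetilde{\bfu}_h - P_h^{\ell-1}\widetilde{\bfu}_h$ vanishes identically on $K$, so $Res_{\text{damp}}=0$ for every test function $\bfv_h\in[V_h^k]^3$, irrespective of the value of the coefficient $\sigma_K^\ell$ defined in \eqref{damping_coef_md}. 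Combining the two pieces yields $Res=0$, which is the claimed well-balanced property.

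I do not expect any real obstacle; the only subtlety worth double-checking is that the characteristic-based coefficient $\sigma_K^\ell(\bfu_h)$ remains finite at the still-water state (it does, because the jumps in $\mathbf{V}=\mathbf{R}^{-1}\bfu_h$ are bounded whenever $h_h>0$), but since $Res_{\text{damp}}$ vanishes regardless of $\sigma_K^\ell$ this observation is not strictly needed for the well-balanced argument itself. The proof therefore reduces to citing the 2D extension of \cite[Proposition 3.1]{XSCICP2006} for the DG part and a one-line projection argument for the damping part.
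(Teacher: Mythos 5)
Your proposal is correct and follows essentially the same route as the paper, which simply notes that the two-dimensional proof is analogous to the one-dimensional one: the volume, surface and source contributions cancel by the well-balanced DG result of \cite{XSCICP2006} applied edge by edge, and the damping term vanishes because $\widetilde{\bfu}_h=(\text{constant},0,0)^T$ is reproduced exactly by every projection $P_h^{\ell-1}$. One minor slip worth noting: at still water over a discontinuous $b_h$ one does \emph{not} have $\bfu_h^{\ast,\pm}=\bfu_h^{\pm}$ (since $h_h^{\ast,\pm}=(h_h+b_h)^{\pm}-b_h^{\ast}$ with $b_h^{\ast}=\max(b_h^+,b_h^-)$); the mechanism in the cited proposition is rather that $h_h^{\ast,-}=h_h^{\ast,+}$ so the numerical flux collapses to the exact flux and the pressure correction balances the source — but since you delegate this step to the citation, it does not affect the validity of your argument.
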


\section{Numerical Tests}
\label{sec_numeric}

We test some one- and two-dimensional numerical examples
 and some benchmark problems to demonstrate the good performance
of the proposed scheme in this section. The time discretization
method in all numerical tests is the fourth order Runge-Kutta (RK4) method given in the Butcher tableau
in Table \ref{tab_rk4}, and the piecewise $P^2$ polynomial space is used unless otherwise
specified. The gravitational constant $g=9.812m/s^2$.

\begin{table}[!ht]
\caption{ \label{tab_rk4} The Butcher tableau of the RK4. }
\begin{tabular}{c|cccc}
0 &  &  &  & \\
1/2 & 1/2  &  &  &  \\
1/2 & 0 & 1/2 &  &  \\
1 & 0 & 0 & 1 &  \\ \hline
 & 1/6 & 1/6 & 1/3 & 1/6
\end{tabular}
\end{table}

\subsection{One-dimensional Problems}
\label{sec_numeric_1d}

\begin{exmp}\label{examp1}
In this example, we consider two different bottom functions, one is smooth and another is discontinuous,
to verify the proposed OFDG scheme maintain the well-balanced property
over both bottoms.
A smooth bottom is given by
\begin{align}\label{numequ1d_1}
b(x)=5e^{-\frac 25 (x-5)^2}, 0\leq x \leq 10,
\end{align}
and a discontinuous bottom is given by:
\begin{align}\label{numequ1d_2}
b(x)=\left\{\begin{array}{cc}
4 & \text{ if } 4 \leq x \leq 8,\\
0 & \text{ otherwise},
\end{array}\right.
\end{align}
The initial data satisfy the following stationary state:
\begin{align*}
h+b=10,\quad hu=0.
\end{align*}
\end{exmp}
We compute both solutions until $t=0.5$ on the uniform mesh with $N=200$.
In the simulations, we adopt the different precisions as shown in Table \ref{tab1} to verify the well-balanced property.  The errors for the water height $h$ and the discharge $hu$ in $L^1$, $L^2$
and $L^{\infty}$ norm are shown in Table \ref{tab1}.
From Table \ref{tab1}, it is clearly to see that our scheme preserves the steady state in the
round-off error level.

\begin{table}[htb]
\centering
\caption{$L^1$, $L^2$ and $L^\infty$ errors for the stationary solution with different precisions in Example \ref{examp1}. \label{tab1}}
\resizebox{\textwidth}{!}{
\begin{tabular}{|c|c|cc|cc|cc|}
  \hline
   &  & \multicolumn{2}{c|}{$L^1$ error} & \multicolumn{2}{c|}{$L^2$ error} & \multicolumn{2}{c|}{$L^\infty$ error}\\ \cline{3-8}
   & precision & $h$ & $hu$ & $h$ & $hu$& $h$ & $hu$ \\ \hline
  \multirow{3}{*}{smooth}
   & single &  1.372E-05           &  6.251E-05                    &  1.424E-05           &  8.011E-05           &  2.193E-05
           &  2.484E-04         \\\cline{2-8}
   & double & 2.909E-14 & 8.752E-14           & 2.953E-14 & 1.091E-13           & 4.441E-14         & 2.599E-13         \\\cline{2-8}
   & quadruple &  2.511E-32    &  7.277E-32    &  2.587E-32    &  8.911E-32    &  4.314E-32
    &  2.945E-31
   \\\hline
  \multirow{3}{*}{nonsmooth}
  & single &  1.376E-07    &  9.211E-06    &  3.351E-07    &  3.024E-05    &  1.431E-06
    &  1.902E-04        \\\cline{2-8}
   & double &  5.611E-16    &  7.560E-14    &  9.625E-16    &  1.258E-13    &  3.553E-15
    &  6.733E-13         \\\cline{2-8}
   & quadruple &  2.385E-33    &  9.419E-33    &  2.535E-33    &  2.051E-32    &  4.622E-33 &  1.187E-31         \\
  \hline
\end{tabular}
}
\end{table}

\begin{exmp}
\label{examp9}
In this example we consider a still water steady state with non-flat bottom containing a wet/dry interface.
The bottom topography is given by \cite{LMAdv2009,XZS2010}:
\begin{align*}
b(x) = \max(0,0.25 - 5(x-0.5)^2), \quad 0 \leq x \leq 1.
\end{align*}
The initial conditions are the stationary state solution:
\begin{align*}
h + b = \max(0.2, b), \quad hu = 0.
\end{align*}
The periodic boundary conditions are considered.
\end{exmp}
Note that in this case $h+b$ is no longer a
constant function. In Figure \ref{fig_examp9}, we plot the
surface level $h + b$ and the bottom $b$. Since the water height $h=0$ if $0.4\leq x \leq 0.6$, then the numerical solution of $h$ easily becomes negative in this region. Thus the
positive-preserving limiter \cite{XZS2010} should be applied in this example. In the simulations, we use 200 uniform cells and compute the solution until $t = 0.5$.
We also use the different precisions to verify that $L^1$, $L^2$ and $L^\infty$ errors are
at the level of round-off error, and present results in Table \ref{tab_examp9}. From Figure \ref{fig_examp9} and Table \ref{tab_examp9}, we can see that the well-balanced OFDG scheme \eqref{wbofdg_scheme} combined with the positive-preserving limiter does not destroy the well-balanced property.
\begin{table}[htb]
\centering
\caption{$L^1$, $L^2$ and $L^\infty$ errors for different precisions in Example \ref{examp9}. \label{tab_examp9}}
\resizebox{\textwidth}{!}{
\begin{tabular}{|c|cc|cc|cc|}
  \hline
     & \multicolumn{2}{c|}{$L^1$ error} & \multicolumn{2}{c|}{$L^2$ error} & \multicolumn{2}{c|}{$L^\infty$ error}\\ \cline{2-7}
    precision & $h$ & $hu$ & $h$ & $hu$& $h$ & $hu$ \\ \hline
    single &  1.642E-08    &  7.765E-08    &  2.235E-08    &  1.499E-07    &  9.220E-08    &  8.003E-07\\ \hline
    double &  2.113E-15    &  1.160E-15    &  2.413E-15    &  1.514E-15    &  3.553E-15    &  5.500E-15        \\\hline
    quadruple &  8.253E-33    &  8.992E-33    &  1.046E-32    &  1.452E-32    &  1.914E-32    &  8.436E-32
   \\
   \hline
\end{tabular}
}
\end{table}

\begin{figure}[htb]
\centering
\caption{ \label{fig_examp9} The surface level $h+b$ and the bottom $b$ for the stationary flow in Example \ref{examp9}.}
\includegraphics[width=0.5\textwidth]{./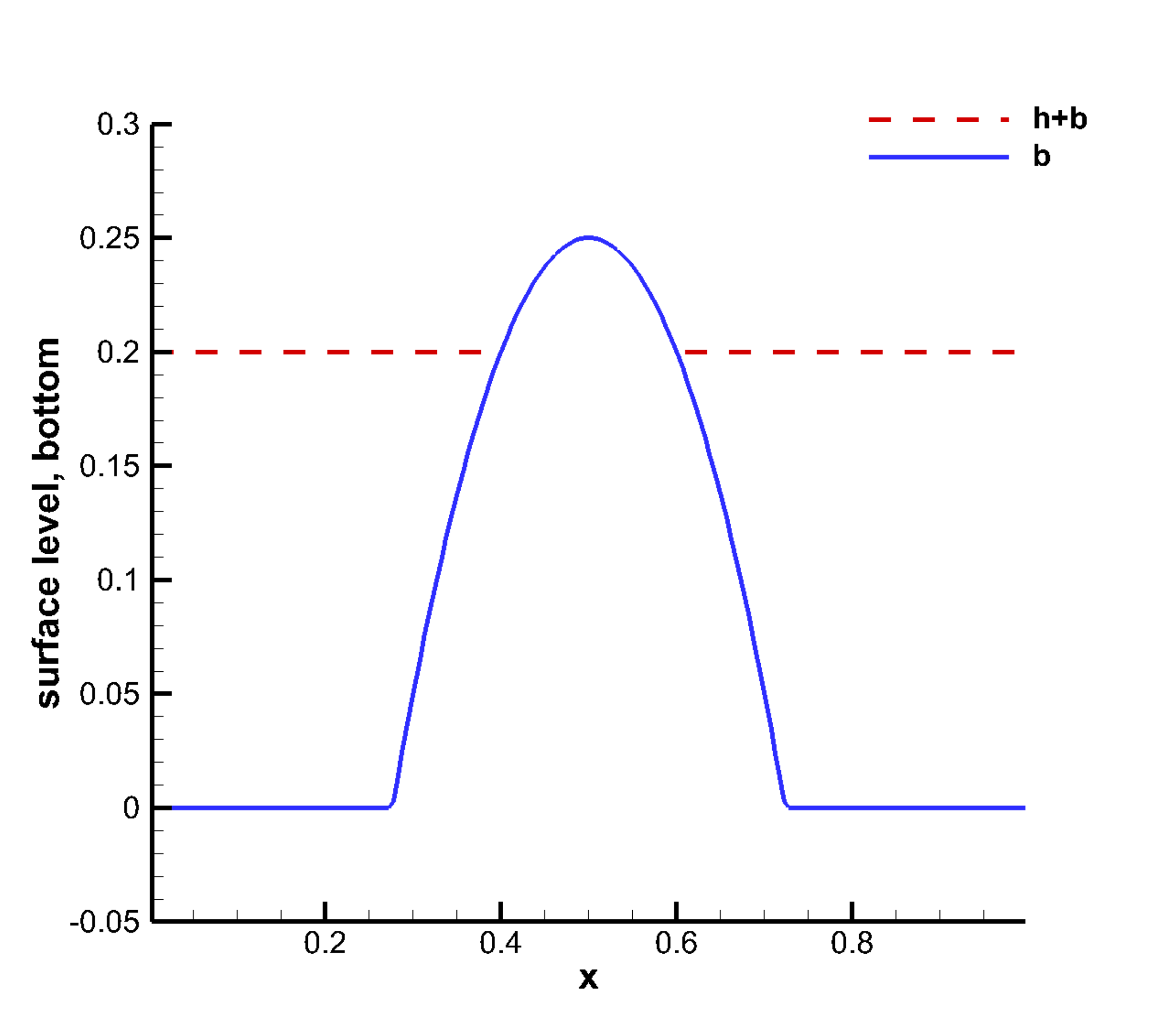}
\end{figure}

\begin{exmp} \label{examp2}
The orders of accuracy for the well-balanced OFDG scheme \eqref{wbofdg_scheme} will be tested in this example. We consider the periodic boundary conditions and take the smooth bottom function:
\begin{align*}
b(x) = \sin^2(\pi x),~~ x\in [0,1].
\end{align*}
The initial data are given by
\begin{align*}
 h(x,0)=5+e^{\cos(2\pi x)}, ~~ (hu)(x,0)=\sin(\cos(2\pi x)), ~~ x\in [0,1].
\end{align*}
\end{exmp}
Since we do not explicitly know  the exact solutions of this problem, we adopt the a posteriori error
$\|\bfu_h-\bfu_{\frac h2}\|$ as the numerical errors to compute the convergence rate. The terminal time is set to $t=0.1$ such that the solution is still smooth. We use $P^1$, $P^2$ and $P^3$ piecewise polynomials as finite element spaces. From Tables \ref{tab_1d_SW_smooth_h}-\ref{tab_1d_SW_smooth_hu}, the optimal convergence rate of all variables can be observed in each case.
\begin{table}[htb]\small
\caption{\label{tab_1d_SW_smooth_h} $h$'s numerical errors and orders
in Example \ref{examp2}. } \centering
\medskip
\begin{tabular}{|c|r||cc|cc|cc|}  \hline
 & $N$ & $L^1$ error & order & $L^2$ error & order & $L^\infty$ error & order \\ \hline \hline
 \multirow{6}{0.6cm}{$P^1$}
    &   10    &  5.242E-02    &   --    &  6.940E-02    &   --    &  1.964E-01    &   --   \\
    &   20    &  1.461E-02    &   1.843    &  2.529E-02    &   1.456    &  8.430E-02    &   1.220   \\
    &   40    &  3.068E-03    &   2.252    &  6.031E-03    &   2.068    &  2.343E-02    &   1.847   \\
    &   80    &  5.806E-04    &   2.401    &  1.262E-03    &   2.257    &  7.800E-03    &   1.587   \\
    &  160    &  1.050E-04    &   2.468    &  2.202E-04    &   2.519    &  1.495E-03    &   2.384   \\
    &  320    &  2.220E-05    &   2.241    &  4.343E-05    &   2.342    &  3.045E-04    &   2.295   \\\hline
 \multirow{6}{0.6cm}{$P^2$}
    &   10    &  1.028E-02    &   --    &  1.952E-02    &   --    &  6.027E-02    &   --  \\
    &   20    &  1.999E-03    &   2.362    &  4.547E-03    &   2.102    &  1.938E-02    &   1.637   \\
    &   40    &  2.353E-04    &   3.086    &  6.390E-04    &   2.831    &  4.108E-03    &   2.238   \\
    &   80    &  2.146E-05    &   3.455    &  6.934E-05    &   3.204    &  6.373E-04    &   2.689   \\
    &  160    &  2.071E-06    &   3.373    &  6.798E-06    &   3.350    &  9.100E-05    &   2.808   \\
    &  320    &  2.277E-07    &   3.185    &  7.575E-07    &   3.166    &  1.267E-05    &   2.845   \\ \hline
 \multirow{6}{0.6cm}{$P^3$}
    &   10    &  3.372E-03    &  --   &  7.495E-03    &   --    &  2.794E-02    &   --\\
    &   20    &  4.070E-04    &   3.050    &  1.054E-03    &   2.830    &  5.064E-03    &   2.464\\
    &   40    &  2.815E-05    &   3.854    &  9.152E-05    &   3.526    &  6.525E-04    &   2.956\\
    &   80    &  1.237E-06    &   4.508    &  4.393E-06    &   4.381    &  4.369E-05    &   3.901\\
    &  160    &  6.439E-08    &   4.264    &  2.506E-07    &   4.132    &  3.245E-06    &   3.751\\
    &  320    &  3.778E-09    &   4.091    &  1.523E-08    &   4.040    &  1.931E-07    &   4.071\\ \hline
\end{tabular}
\end{table}

\begin{table}[htb]\small
\caption{\label{tab_1d_SW_smooth_hu} $hu$'s numerical errors and orders
in Example \ref{examp2}. } \centering
\medskip
\begin{tabular}{|c|r||cc|cc|cc|}  \hline
 & $N$ & $L^1$ error & order & $L^2$ error & order & $L^\infty$ error & order \\ \hline \hline
 \multirow{6}{0.6cm}{$P^1$}
    &   10    &  2.571E-01    &   --    &  3.737E-01    &   --    &  8.432E-01    &   --   \\
    &   20    &  8.741E-02    &   1.557    &  1.492E-01    &   1.325    &  5.076E-01    &   0.732   \\
    &   40    &  2.537E-02    &   1.785    &  5.202E-02    &   1.520    &  2.033E-01    &   1.320   \\
    &   80    &  4.726E-03    &   2.424    &  1.077E-02    &   2.272    &  6.570E-02    &   1.629   \\
    &  160    &  8.391E-04    &   2.494    &  1.885E-03    &   2.514    &  1.290E-02    &   2.349   \\
    &  320    &  1.763E-04    &   2.250    &  3.721E-04    &   2.341    &  2.643E-03    &   2.287   \\\hline
 \multirow{6}{0.6cm}{$P^2$}
    &   10    &  8.069E-02    &   --    &  1.568E-01    &   --    &  5.599E-01    &   --   \\
    &   20    &  1.414E-02    &   2.512    &  3.342E-02    &   2.230    &  1.379E-01    &   2.021   \\
    &   40    &  1.941E-03    &   2.865    &  5.386E-03    &   2.634    &  3.297E-02    &   2.065   \\
    &   80    &  1.802E-04    &   3.430    &  5.939E-04    &   3.181    &  5.416E-03    &   2.606   \\
    &  160    &  1.703E-05    &   3.403    &  5.818E-05    &   3.352    &  7.835E-04    &   2.789   \\
    &  320    &  1.864E-06    &   3.192    &  6.481E-06    &   3.166    &  1.094E-04    &   2.840   \\
\hline
 \multirow{6}{0.6cm}{$P^3$}
    &   10    &  2.888E-02    &   --    &  6.340E-02    &   --    &  2.117E-01    &   --   \\
    &   20    &  3.487E-03    &   3.050    &  9.055E-03    &   2.808    &  4.477E-02    &   2.241   \\
    &   40    &  2.422E-04    &   3.848    &  7.915E-04    &   3.516    &  5.728E-03    &   2.967   \\
    &   80    &  1.066E-05    &   4.506    &  3.814E-05    &   4.375    &  3.895E-04    &   3.878   \\
    &  160    &  5.570E-07    &   4.258    &  2.166E-06    &   4.138    &  2.877E-05    &   3.759   \\
    &  320    &  3.252E-08    &   4.098    &  1.314E-07    &   4.043    &  1.705E-06    &   4.076   \\\hline
\end{tabular}
\end{table}

\begin{exmp}\label{examp3}
A small perturbation of a steady-state
water is considered to examine the capability of capturing the small perturbation of the proposed scheme. This test example was proposed by LeVeque \cite{LeVeque1998}.
The bottom topography is given by:
\begin{align*}
b(x) = \left\{ \begin{aligned}
0.25\Big(\cos\big(10\pi(x-1.5)\big)+1\Big) & \text{ if~~ } 1.4 \leq x \leq 1.6,\\
0\quad\quad & \text{ otherwise.}
\end{aligned}\right.
\end{align*}
We take the initial conditions as follows:
\begin{align*}
(hu)(x,0)=0,\quad h(x,0) =
\left\{\begin{aligned}
& 1-b(x)+\epsilon & \text{ if~~} 1.1\leq x\leq 1.2,\\
& 1-b(x) & \text{ otherwise, }
\end{aligned}\right.
\end{align*}
where $\epsilon$ is a non-zero perturbation constant.
\end{exmp}

In Example \ref{examp3}, two perturbations $\epsilon=0.2$ (big pulse) and $\epsilon =0.001$ (small pulse)
are used to test the scheme. For small $\epsilon$, this disturbance should generate two waves, propagating
opposite directions at the characteristic speeds $\pm \sqrt{gh}$. It is difficult to involving such
small perturbations of the water surface for many
numerical methods \cite{LeVeque1998}. We use $200$ uniform cells with simple transmissive boundary
condition to compute the solution at time $t=0.2$. Figures \ref{fig_examp3_big} and
\ref{fig_examp3_small} show the surface level and discharge for the big and small pulse cases respectively.
For comparison, we also show the $2000$ cells solution as reference. We can see that our scheme
successfully capture these waves on the relative coarse mesh and there are no obvious spurious numerical oscillations.
\begin{figure}[htb]
\centering
\caption{ \label{fig_examp3_big} Example \ref{examp3}: $\epsilon=0.2$.}
\subfigure[Surface level]{
\includegraphics[width=0.45\textwidth]{./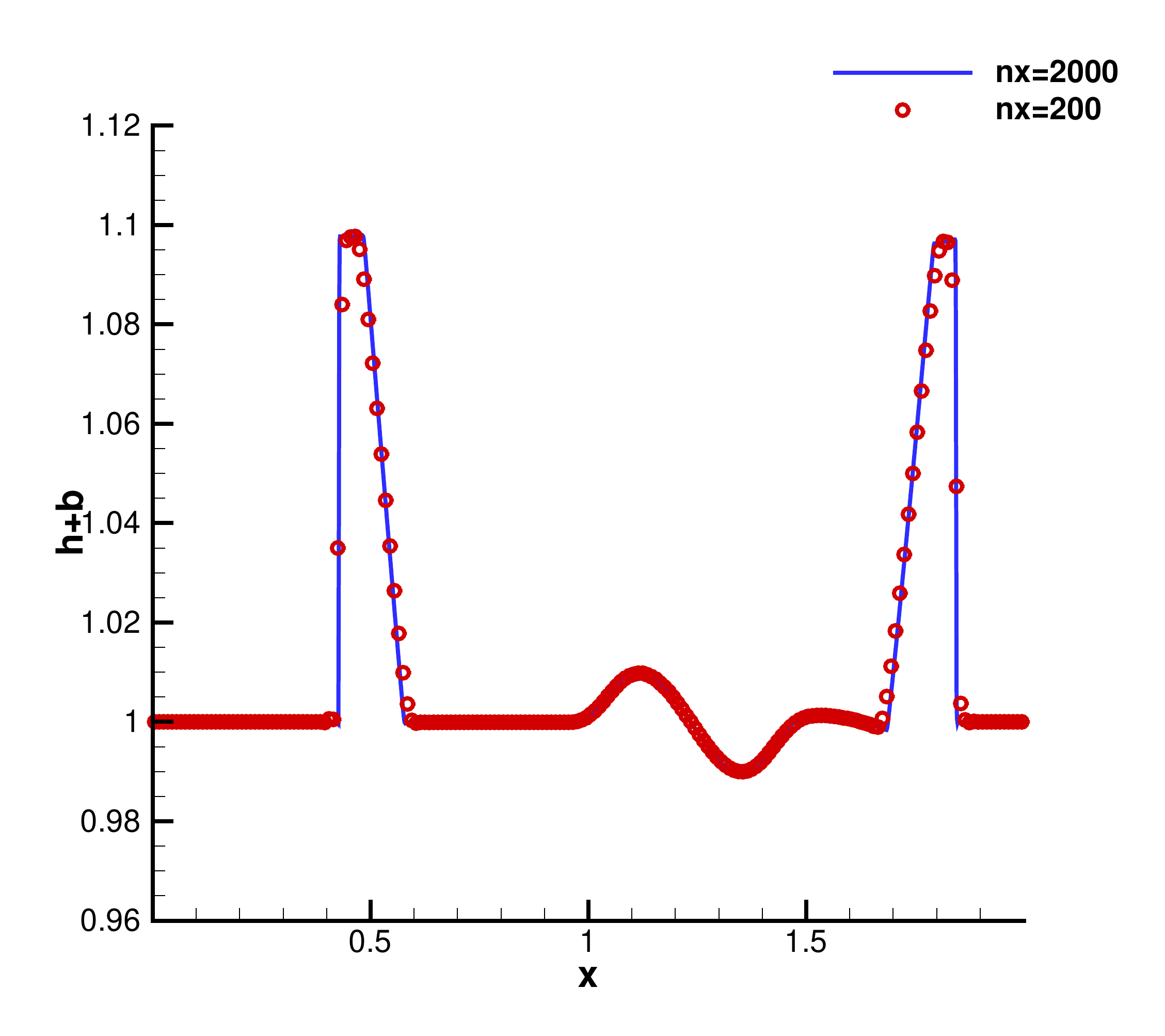} }
\subfigure[The discharge]{
\includegraphics[width=0.45\textwidth]{./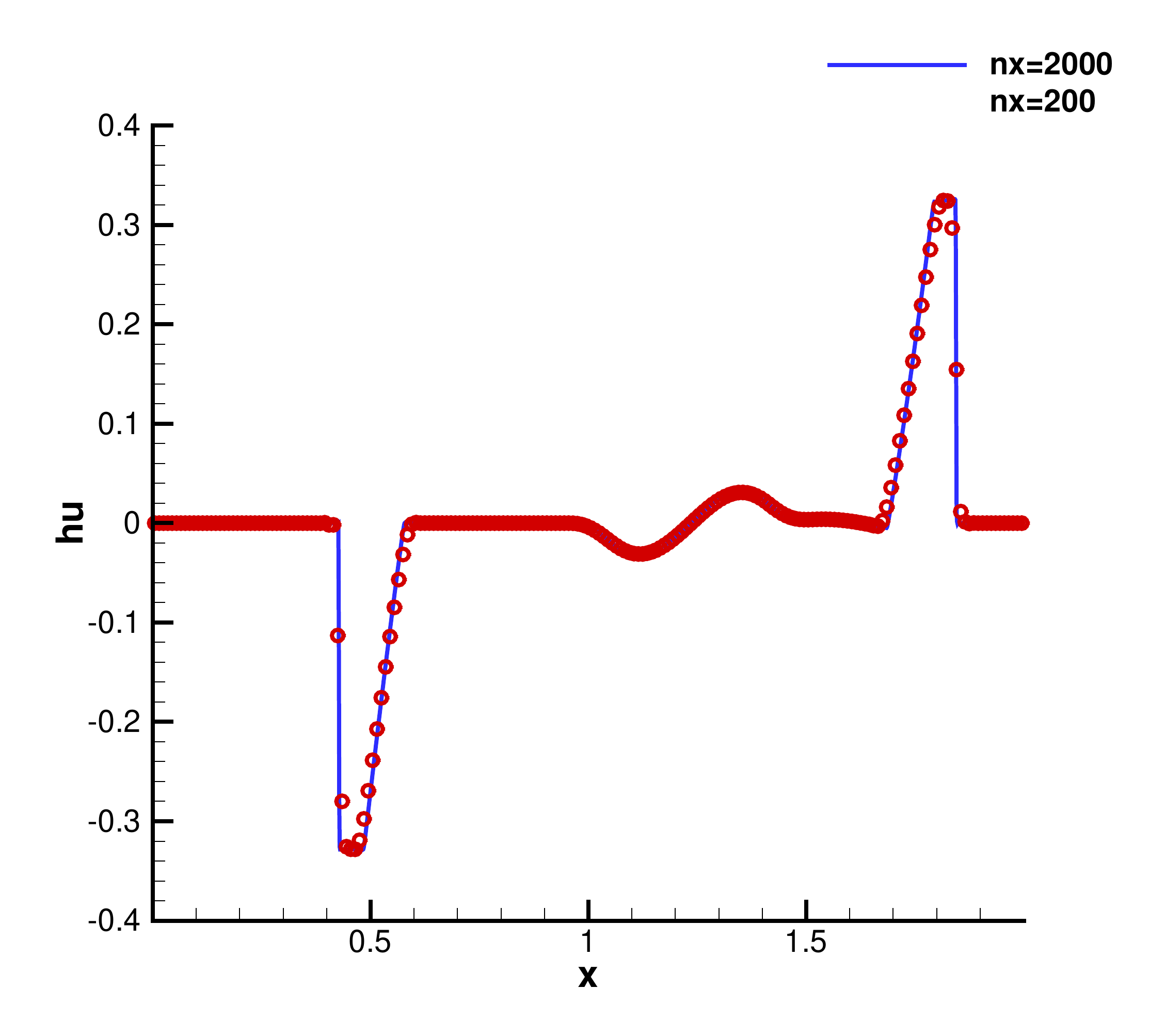} }
\end{figure}

\begin{figure}[htb]
\centering
\caption{ \label{fig_examp3_small} Example \ref{examp3}: $\epsilon=0.001$.}
\subfigure[Surface level]{
\includegraphics[width=0.45\textwidth]{./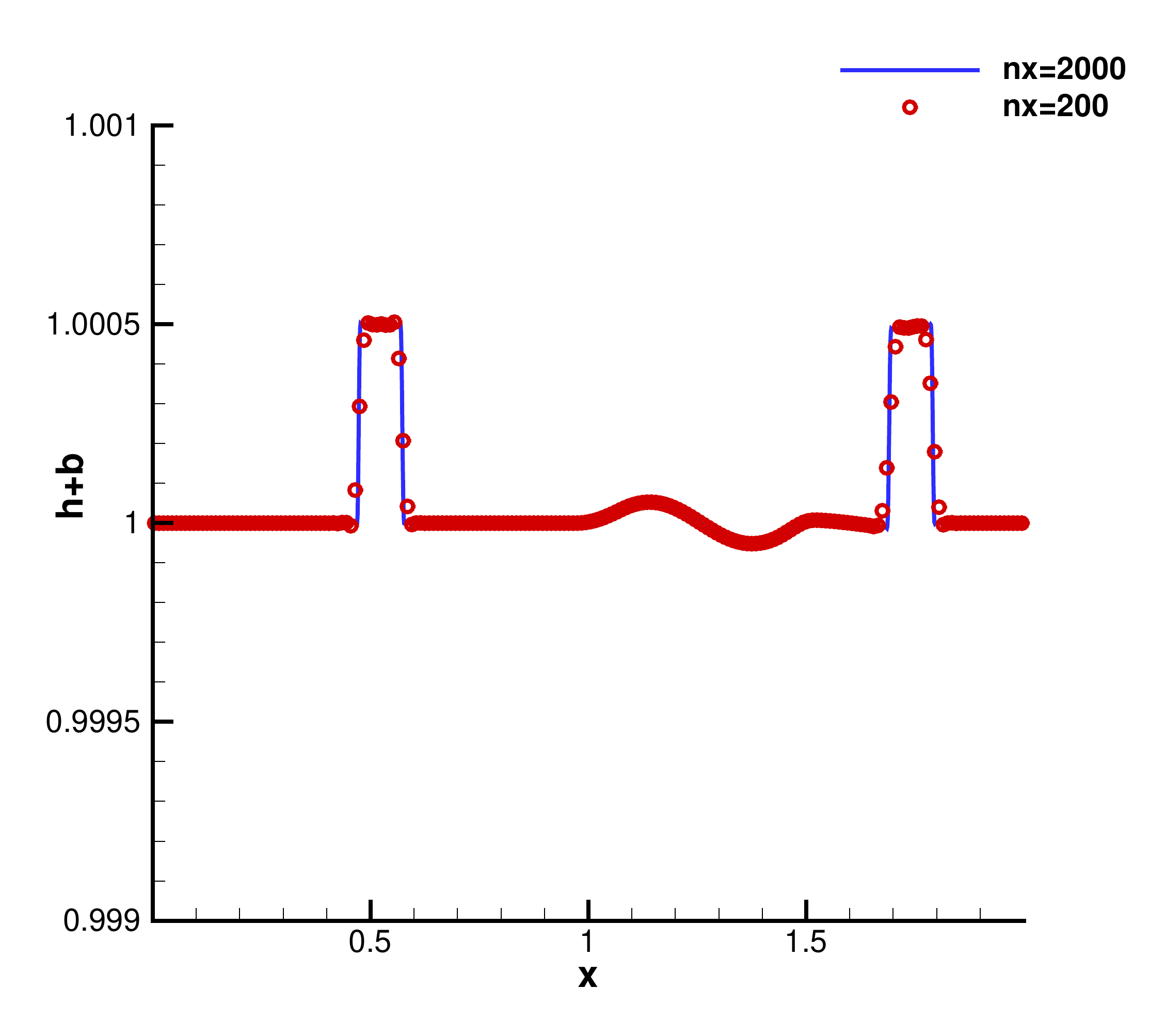} }
\subfigure[The discharge]{
\includegraphics[width=0.45\textwidth]{./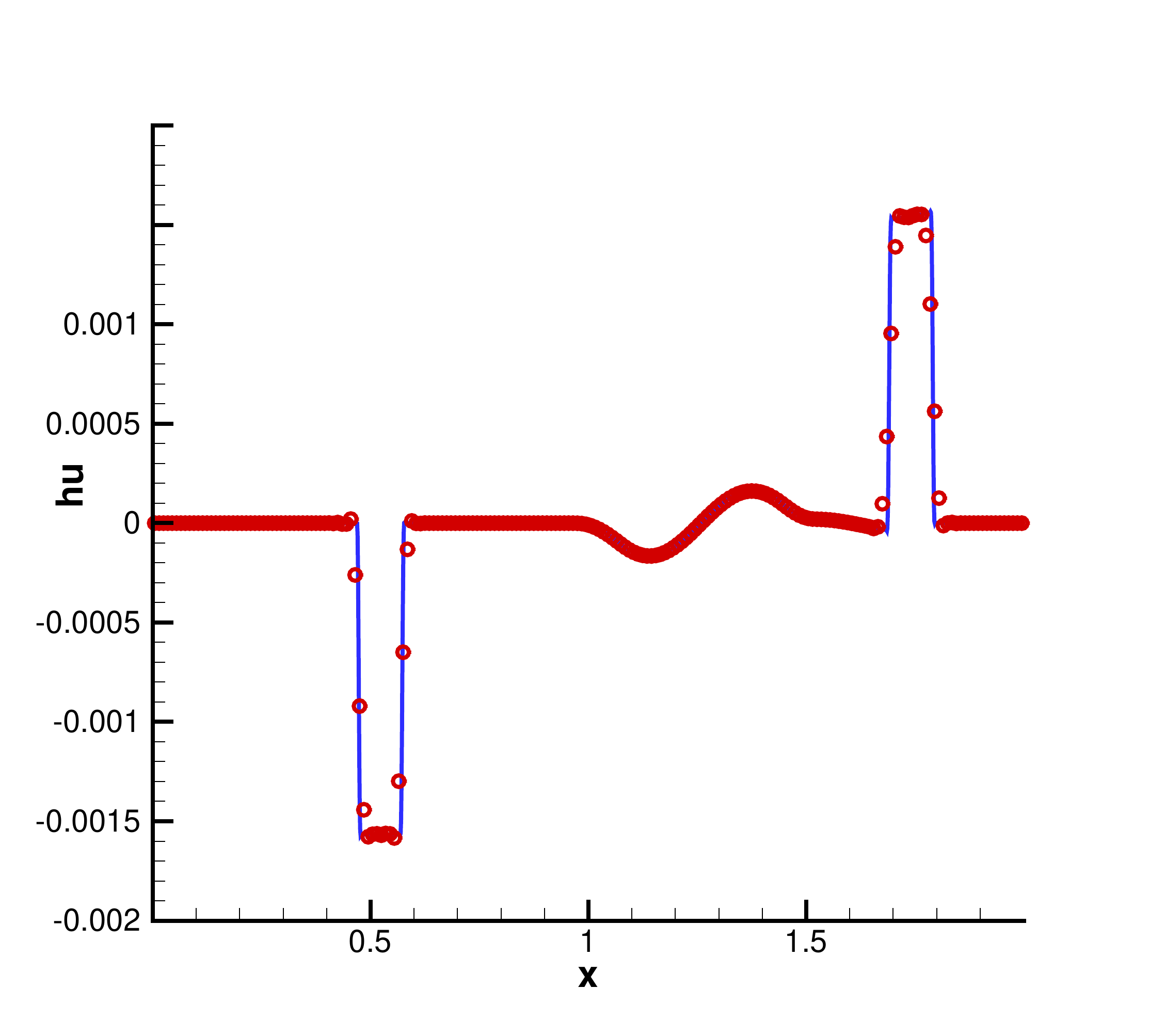} }
\end{figure}

\begin{exmp}\label{examp4}
Next, we simulate the dam breaking problem over a rectangular bump \cite{XSCICP2006}. We use this example to to test the scheme in the case of a rapidly varying flow over discontinuous bottom topography.
The bed level is given with
\begin{align*}
b(x)=\left\{\begin{aligned}
8& \text{ if~~ } |x-750|\leq 187.5,\\
0& \text{ otherwise, }
\end{aligned}\right.\quad x\in [0,1500].
\end{align*}
The initial data are taken as follows
\begin{align*}
(hu)(x,0)=0,\quad h(x,0)=\left\{\begin{array}{cc}
20-b(x)& \text{ if } x \leq 750,\\
15-b(x) & \text{ otherwise.}
\end{array}\right.
\end{align*}
\end{exmp}
We compute the solutions at $t=15$ and $t=60$ by using $400$ uniform cells and use the results of $4000$ uniform cells as reference solutions. Figure \ref{fig_examp4_1} shows the numerical
results have good resolution and non-oscillatory which agree well with the reference solution.
\begin{figure}[htb]
\centering
\caption{ \label{fig_examp4_1} The dam breaking problem}
\subfigure[$t=15$]{
\includegraphics[width=0.45\textwidth]{./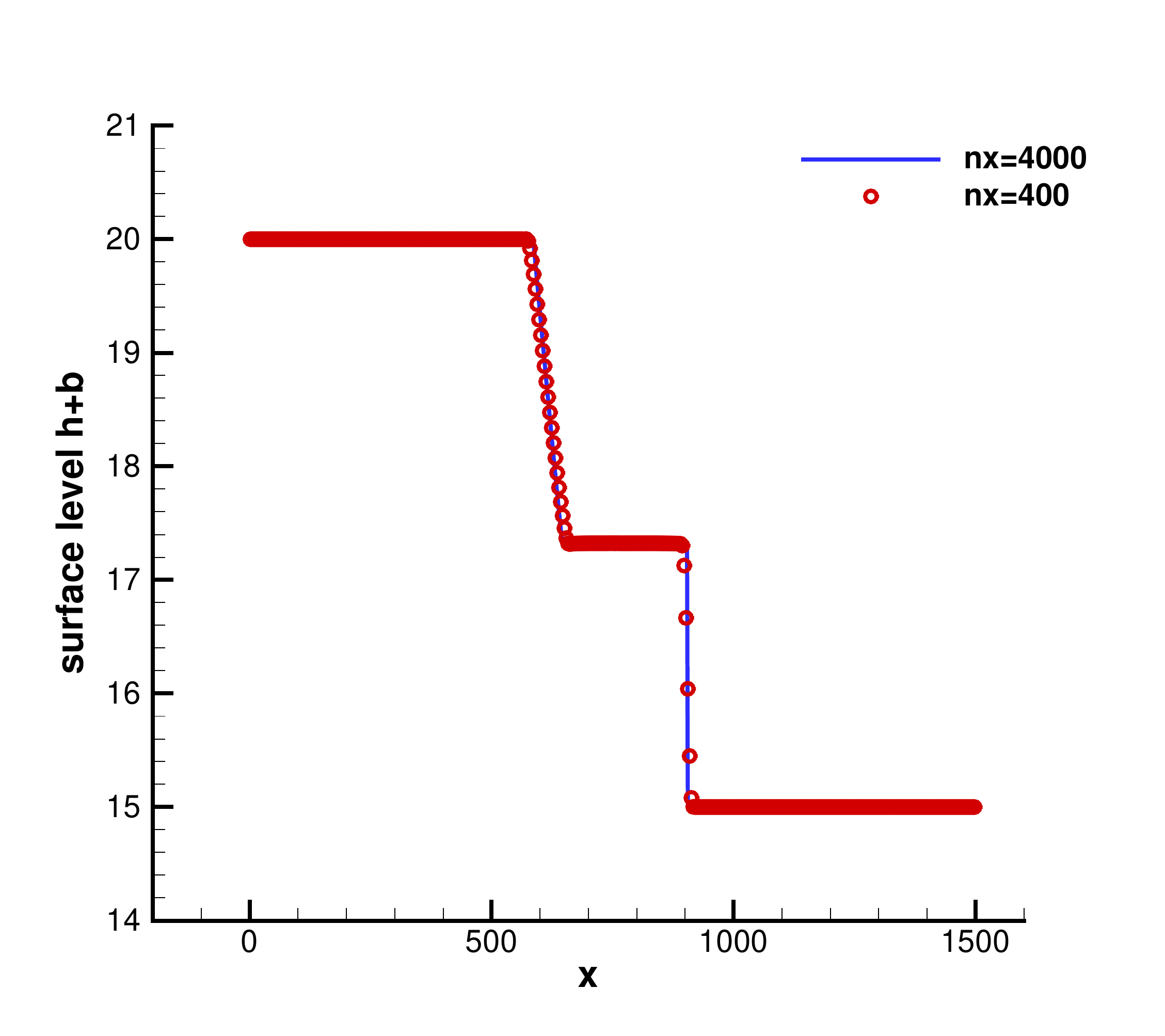} }
\subfigure[$t=60$]{
\includegraphics[width=0.45\textwidth]{./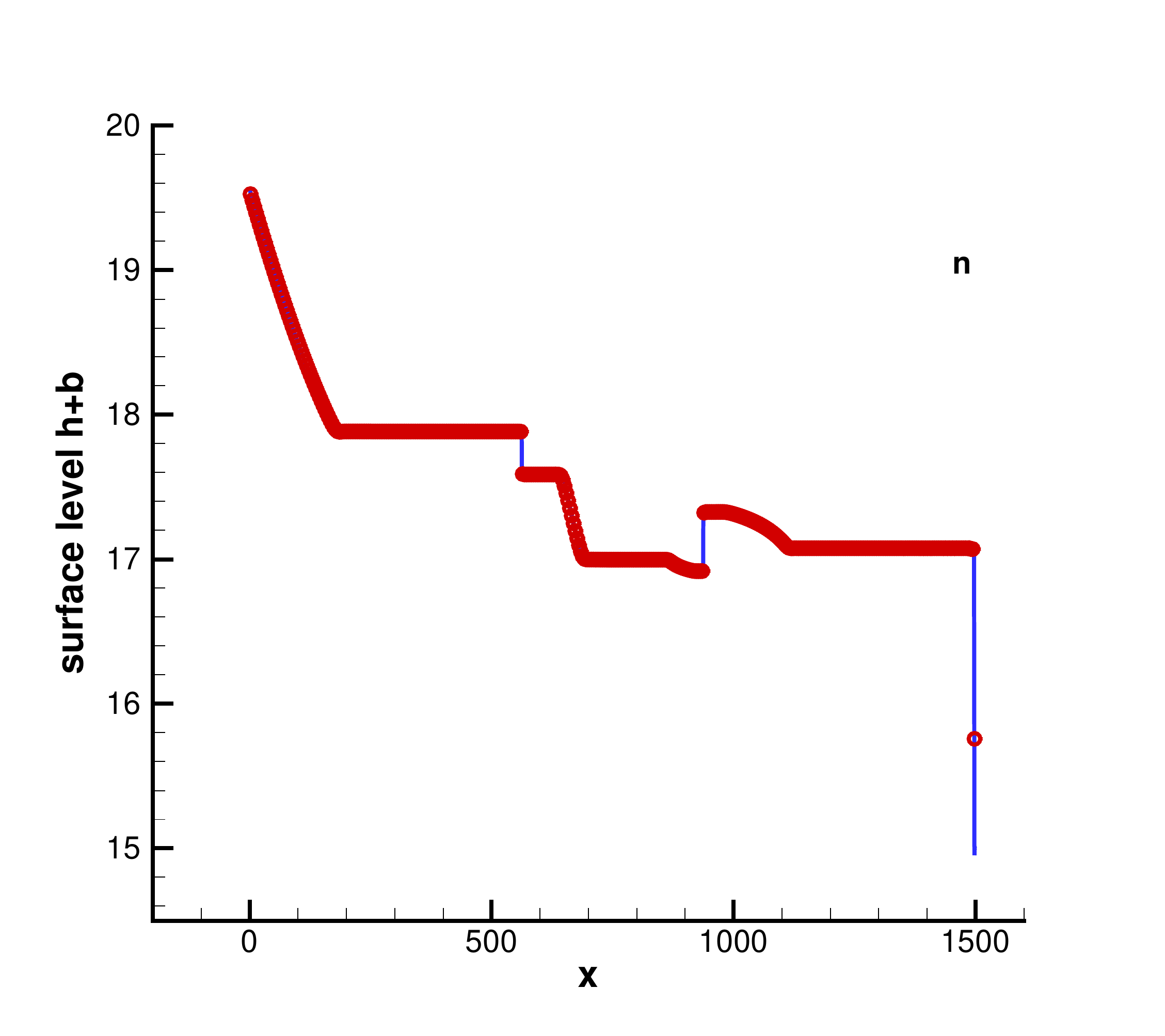} }
\end{figure}

\begin{exmp}\label{examp5}
The last one-dimensional example is to study the convergence in time towards the steady flow
over a bump. They are widely
used to test numerical schemes for shallow water equations \cite{Goutal1997,Vazeuez1999,XSCICP2006}.
We take the bottom function as:
\begin{align*}
b(x)=\left\{\begin{array}{ll}
0.2-0.05(x-10)^2 & \text{ if } 8 \leq x \leq 12,\\
\quad\quad 0 & \text{ otherwise },
\end{array}\right.
\end{align*}
The length of channel is $[0,25]$ and the initial conditions are given by
\begin{align*}
h(x,0)=0.5-b(x),\quad u(x,0)=0.
\end{align*}
\end{exmp}
Theoretically, the flow
can be subcritical or transcritical with or without a steady shock on different boundary conditions.
We compute the solution until $t=200s$ on $200$ uniform meshes. We consider three types of
boundary conditions:

a) Transcritical flow without a shock: The boundary condition is $hu=1.53m^2/s$ at $x=0$ and
$h=0.66m$ at $x=25$.

We plot the surface level $h+b$ and discharge $hu$ in Figure \ref{fig_examp5_1}. The numerical results
 show very good agreement with the analytical solution \cite{Goutal1997}.

b) Transcritical flow with a shock: The boundary conditions are $hu=0.18m^2/s$ at $x=0$
and $h=0.33m$ at $x=25$.

Here we also plot the surface level $h+b$ and the discharge $hu$ in Figure \ref{fig_examp5_2}. We can
observe that the stationary shock appears on the surface. However, there is one overshoot near the jump
on the discharge $hu$. Since our scheme is the still water well-balanced scheme not moving water
well-balanced scheme, the similar result can also be found in \cite{Li2017JSC}.

c) Subcritical flow: The boundary conditions are $hu=4.42m^2/s$ at $x=0$ and $h=2m$ at $x=25$.

We also plot the surface level $h+b$ and the discharge $hu$ in Figure \ref{fig_examp5_3},
and observe that the results have good performances comparing with the analytical solutions \cite{Goutal1997}.

\begin{figure}[htb]
\centering
\caption{ \label{fig_examp5_1} The transcritical flow without a shock}
\subfigure[]{
\includegraphics[width=0.45\textwidth]{./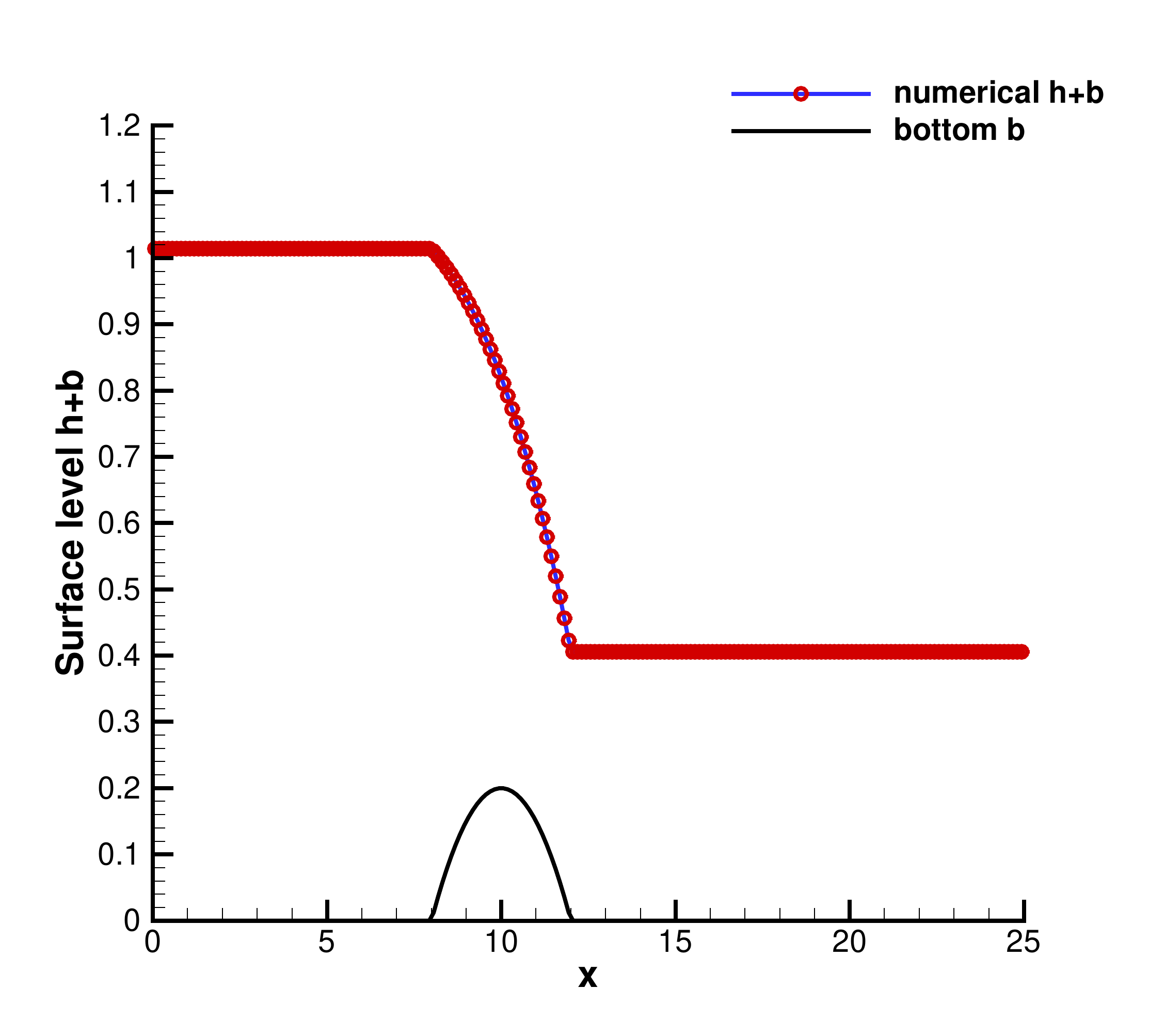} }
\subfigure[]{
\includegraphics[width=0.45\textwidth]{./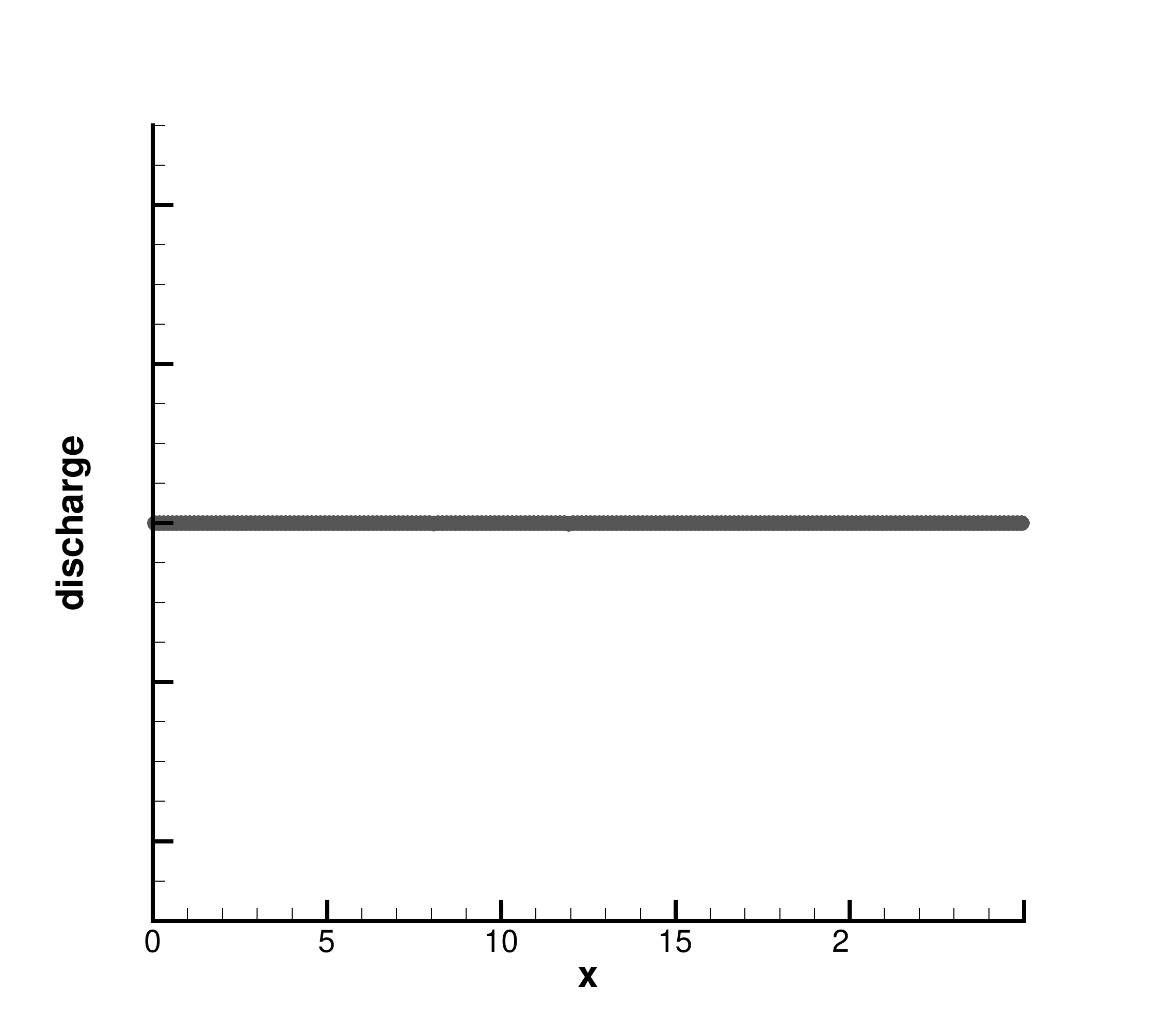} }
\end{figure}

\begin{figure}[htb]
\centering
\caption{ \label{fig_examp5_2} The transcritical flow with a shock}
\subfigure[]{
\includegraphics[width=0.45\textwidth]{./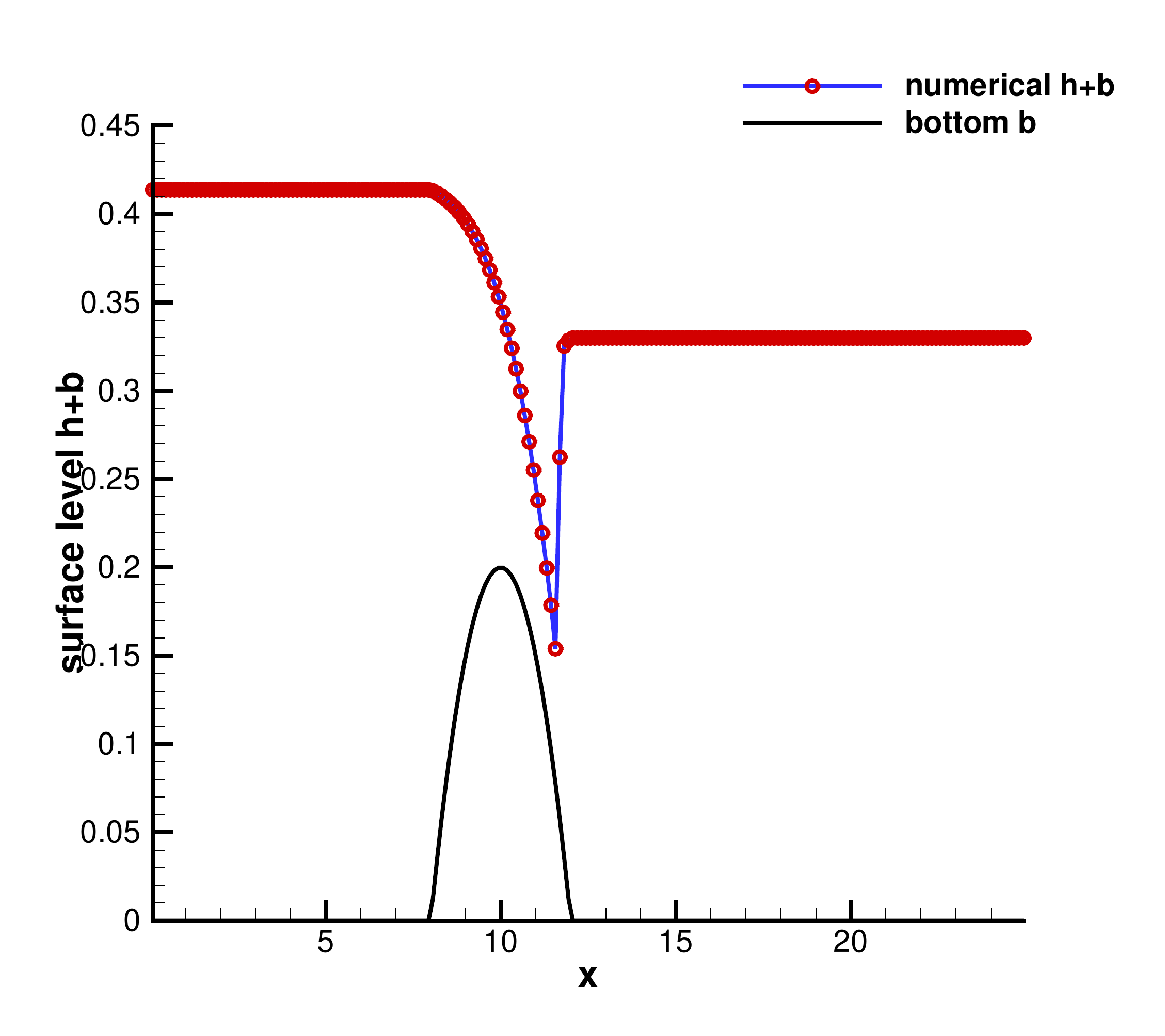} }
\subfigure[]{
\includegraphics[width=0.45\textwidth]{./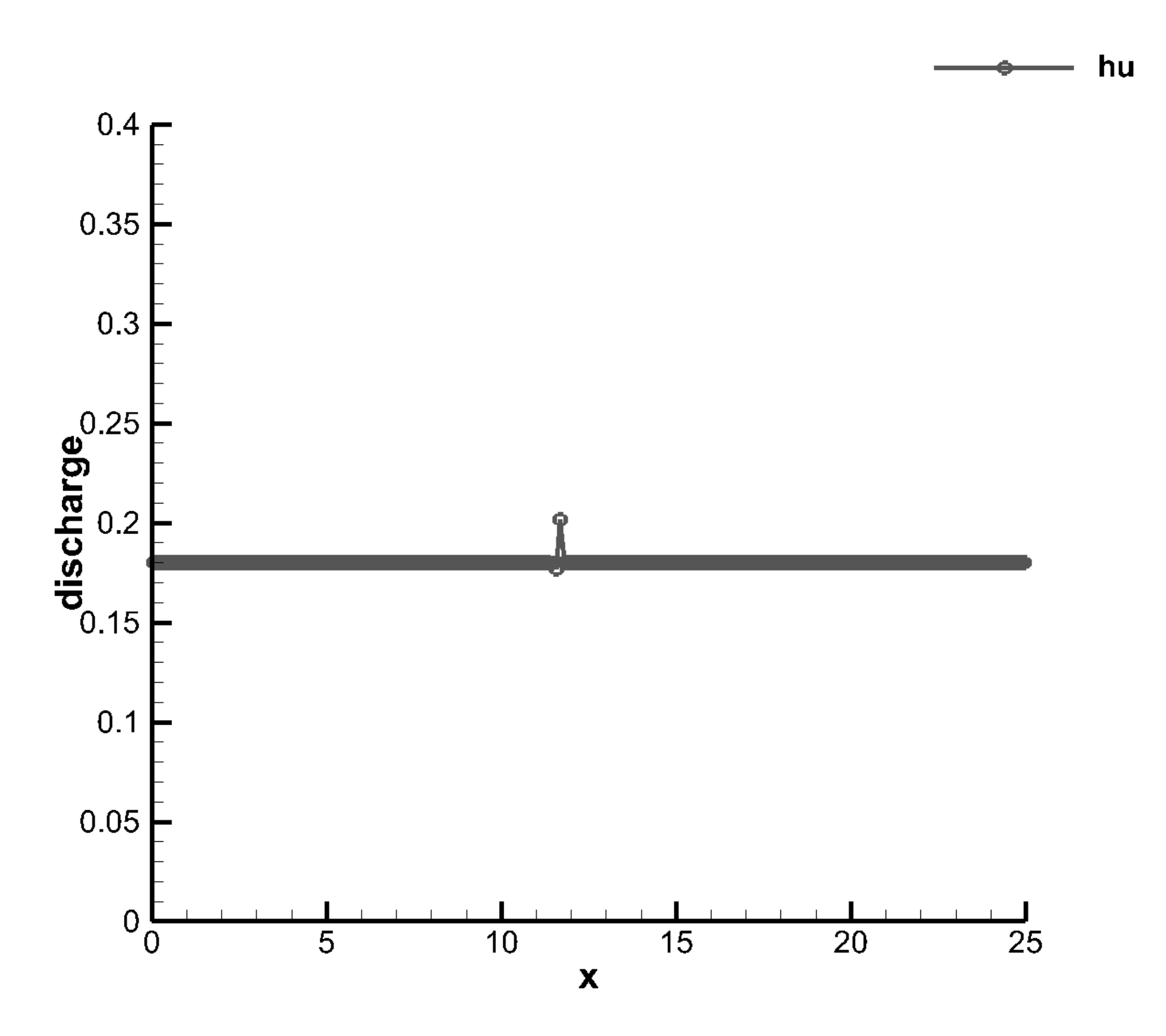} }
\end{figure}

\begin{figure}[htb]
\centering
\caption{ \label{fig_examp5_3} The steady subcritical flow}
\subfigure[]{
\includegraphics[width=0.45\textwidth]{./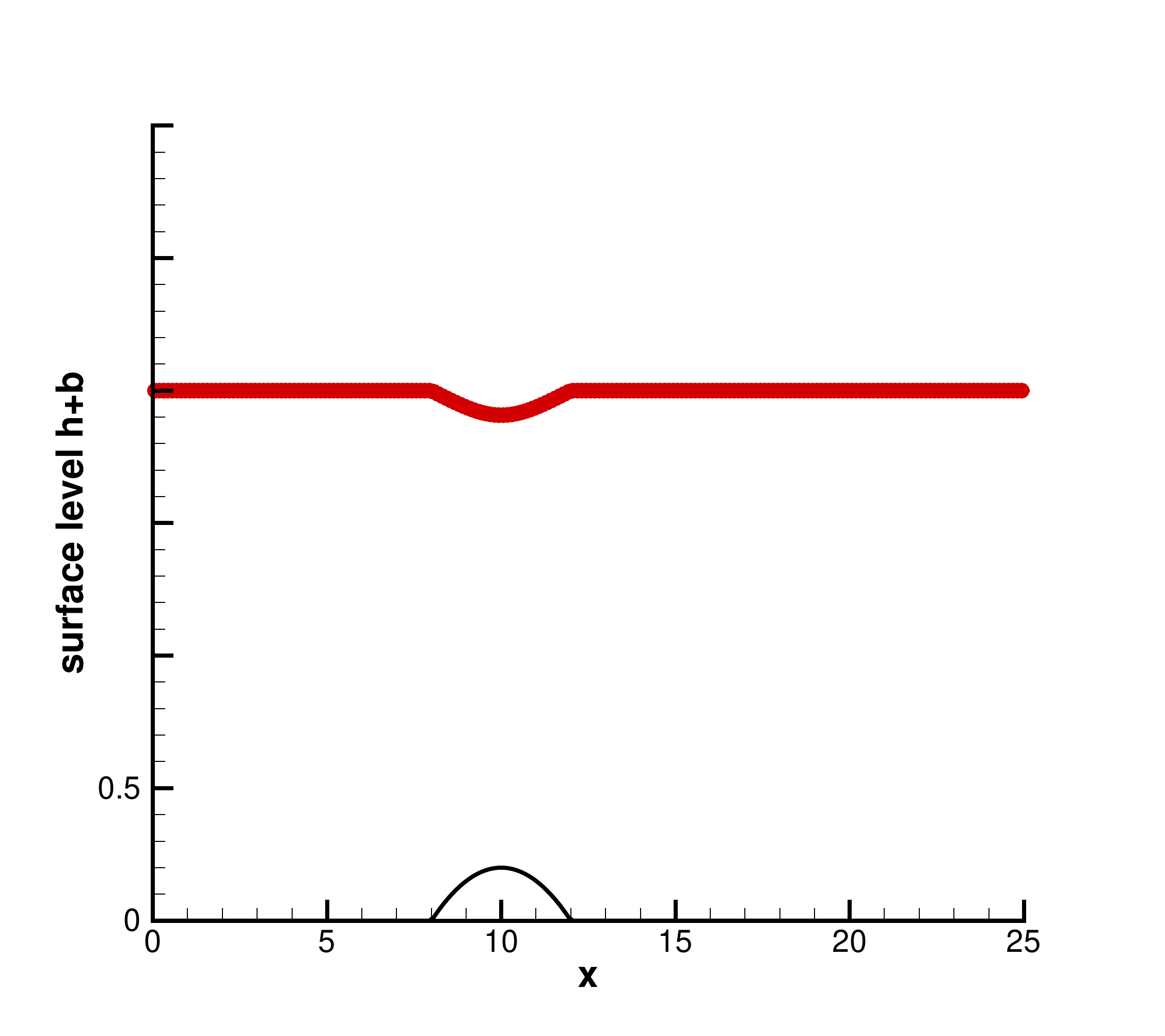} }
\subfigure[]{
\includegraphics[width=0.45\textwidth]{./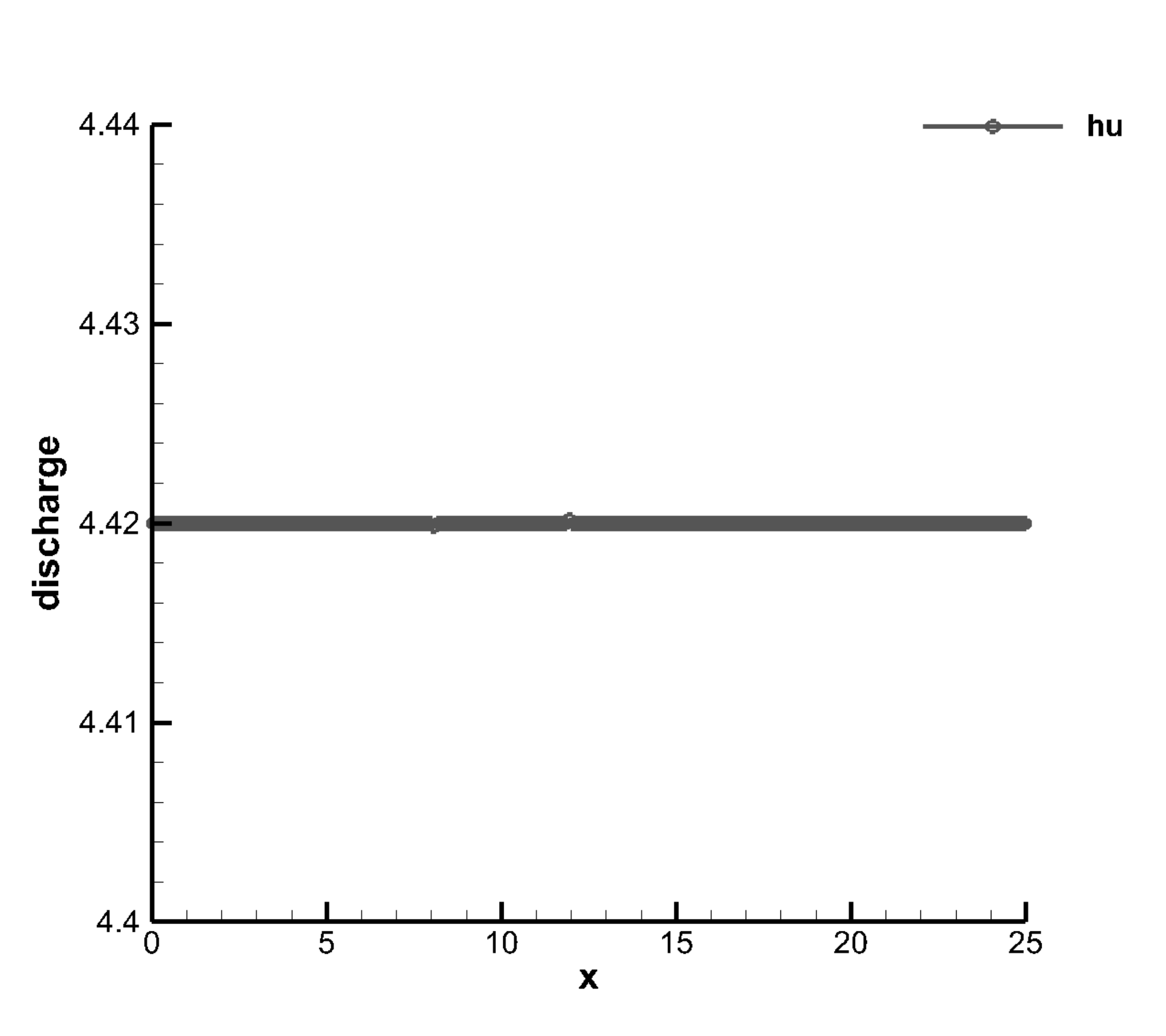} }
\end{figure}

\subsection{Two-dimensional Problems}
\label{sec_numeric_2d}

\begin{exmp}\label{examp6}
Now let us test the well-balanced property in two dimensions. We consider a non-flat bottom as follows:
\begin{align*}
b(x,y) = 0.8e^{-50((x-0.5)^2+(y-0.5)^2)}, \quad (x,y) \in [0,1]^2.
\end{align*}
The initial stationary state is given by:
$$h(x,y) = 1 - b(x,y),\quad  v(x,y,0)=u(x,y,0)=0.$$
\end{exmp}
We use the single, double and quadruple precisions to compute the numerical solutions up to time
$t=0.1$ on the $100\times 100$ uniform mesh. It can be found that the scheme preserve the
still water state from Table \ref{tab_2d_1}. For all numerical solutions, the $L^2$ and $L^\infty$ errors achieve the machine accuracy for different precisions, which confirms that the proposed scheme does preserve  the well-balanced property in two-dimensions.

\begin{table}[htb]
\centering
\caption{$L^2$ and $L^\infty$ errors for the stationary solution with different precisions in Example \ref{examp6}. \label{tab_2d_1}}
\resizebox{\textwidth}{!}{
\begin{tabular}{|c|ccc|ccc|}
  \hline
              & \multicolumn{3}{c|}{$L^2$ error} & \multicolumn{3}{c|}{$L^\infty$ error}\\ \cline{2-7}
    precision & $h$ & $hu$ & $hv$ & $h$& $hu$ & $hv$ \\ \hline
    single        &  4.034E-06    &  4.778E-06    &  4.823E-06    &  5.364E-06    &  4.227E-05    &  4.221E-05     \\ \hline
    double    &  1.892E-14    &  1.055E-14    &  1.041E-14    &  2.143E-14    &  7.965E-14    &  7.678E-14        \\ \hline
    quadruple &  1.303E-32    &  9.898E-33    &  1.076E-32    &  1.483E-32    &  5.068E-32    &  4.904E-32
   \\\hline
\end{tabular}
}
\end{table}

\begin{exmp}\label{examp7}
Next, we test the numerical accuracy of our schemes. Let us consider the following smooth bottom
function and initial conditions:
\begin{align*}
& b(x,y)=\sin(2\pi x)+\cos(2\pi y), \quad h(x,y,0)=10+e^{\sin(2\pi x)}\cos(2 \pi y),\\
& (hu)(x,y,0) = \sin(\cos(2\pi x))\sin(2\pi y), \quad (hv)(x,y,0)=\cos(2\pi x)\cos(\sin(2\pi y))\, ,
\end{align*}
where $(x,y)\in\Omega = [0,1]^2$ and the periodic boundary conditions are imposed.
\end{exmp}

We compute the solution at time $t=0.05$ before the shock appears in the solution.
We still use the a posteriori error $\|\bfu_h-\bfu_{\frac h2}\|$ as numerical errors of $\bfu_h$.
From Tables \ref{tab_2d_SW_smooth_h}-\ref{tab_2d_SW_smooth_hv}, we can observe the
optimal convergence rate for all numerical solutions.

\begin{table}[htb]\small
\caption{\label{tab_2d_SW_smooth_h} $h$'s numerical errors and orders
in Example \ref{examp7}. } \centering
\medskip
\begin{tabular}{|c|r||cc|cc|cc|}  \hline
 & $N\times N$ & $L^1$ error & order & $L^2$ error & order & $L^\infty$ error & order \\ \hline \hline
 \multirow{6}{0.6cm}{$P^1$}
    &   $10\times10$    &  9.202E-02    &    --    &  1.174E-01    &    --   &  4.798E-01    &    --   \\
    &   $20\times20$    &  2.110E-02    &   2.125    &  3.002E-02    &   1.968    &  1.783E-01    &   1.428   \\
    &   $40\times40$    &  4.318E-03    &   2.289    &  6.515E-03    &   2.204    &  5.325E-02    &   1.744   \\
    &   $80\times80$    &  9.109E-04    &   2.245    &  1.366E-03    &   2.254    &  1.292E-02    &   2.044   \\
    &  $160\times160$    &  2.120E-04    &   2.103    &  3.158E-04    &   2.113    &  2.879E-03    &   2.165   \\
    &  $320\times320$    &  5.167E-05    &   2.037    &  7.717E-05    &   2.033    &  7.884E-04    &   1.869   \\\hline
 \multirow{6}{0.6cm}{$P^2$}
    &   $10\times10$    &  1.543E-02    &    --    &  2.472E-02    &    --    &  1.416E-01    &    --   \\
    &   $20\times20$    &  1.949E-03    &   2.985    &  4.203E-03    &   2.556    &  3.601E-02    &   1.975   \\
    &   $40\times40$    &  2.056E-04    &   3.245    &  4.995E-04    &   3.073    &  8.813E-03    &   2.031   \\
    &   $80\times80$    &  2.366E-05    &   3.119    &  6.080E-05    &   3.038    &  1.521E-03    &   2.534   \\
    &  $160\times160$    &  2.837E-06    &   3.060    &  7.543E-06    &   3.011    &  2.381E-04    &   2.676   \\
    &  $320\times320$    &  3.506E-07    &   3.017    &  9.414E-07    &   3.002    &  3.221E-05    &   2.886   \\
\hline
 \multirow{6}{0.6cm}{$P^3$}
    &   $10\times10$    &  3.722E-03    &    --    &  7.671E-03    &    --    &  4.978E-02    &    --   \\
    &   $20\times20$    &  2.475E-04    &   3.910    &  7.182E-04    &   3.417    &  1.087E-02    &   2.196   \\
    &   $40\times40$    &  1.496E-05    &   4.049    &  4.786E-05    &   3.908    &  1.544E-03    &   2.815   \\
    &   $80\times80$    &  8.752E-07    &   4.095    &  3.412E-06    &   3.810    &  1.477E-04    &   3.386   \\
    &  $160\times160$    &  5.075E-08    &   4.108    &  2.041E-07    &   4.063    &  9.506E-06    &   3.957   \\
    &  $320\times320$   &  3.087E-09    &   4.039    &  1.260E-08    &   4.018    &  5.972E-07    &   3.993   \\ \hline
\end{tabular}
\end{table}

\begin{table}[htb]\small
\caption{\label{tab_2d_SW_smooth_hu} $hu$'s numerical errors and orders
in Example \ref{examp7}. } \centering
\medskip
\begin{tabular}{|c|r||cc|cc|cc|}  \hline
 & $N$ & $L^1$ error & order & $L^2$ error & order & $L^\infty$ error & order \\ \hline \hline
 \multirow{6}{0.6cm}{$P^1$}
    &   $10\times10$    &  4.287E-01    &    --    &  5.806E-01    &    --    &  2.048E+00    &    --   \\
    &   $20\times20$    &  9.343E-02    &   2.198    &  1.315E-01    &   2.143    &  5.354E-01    &   1.935   \\
    &   $40\times40$    &  1.684E-02    &   2.472    &  2.318E-02    &   2.504    &  1.102E-01    &   2.281   \\
    &   $80\times80$    &  3.208E-03    &   2.392    &  4.235E-03    &   2.453    &  2.275E-02    &   2.276   \\
    &  $160\times160$    &  7.060E-04    &   2.184    &  9.118E-04    &   2.215    &  5.696E-03    &   1.998   \\
    &  $320\times320$    &  1.683E-04    &   2.068    &  2.176E-04    &   2.067    &  1.434E-03    &   1.990   \\\hline
 \multirow{6}{0.6cm}{$P^2$}
    &   $10\times10$    &  5.715E-02    &    --    &  8.461E-02    &    --    &  3.783E-01    &    --   \\
    &   $20\times20$   &  6.255E-03    &   3.192    &  9.655E-03    &   3.132    &  7.481E-02    &   2.338   \\
    &   $40\times40$    &  7.633E-04    &   3.035    &  1.191E-03    &   3.020    &  1.481E-02    &   2.336   \\
    &  $80\times80$    &  1.269E-04    &   2.588    &  1.982E-04    &   2.587    &  2.379E-03    &   2.638   \\
    &  $160\times160$    &  2.259E-05    &   2.490    &  3.646E-05    &   2.443    &  3.577E-04    &   2.734   \\
    &  $320\times320$    &  3.849E-06    &   2.553    &  6.413E-06    &   2.507    &  4.974E-05    &   2.846   \\\hline
 \multirow{6}{0.6cm}{$P^3$}
    &   $10\times10$    &  1.333E-02    &    --    &  2.119E-02    &    --    &  1.246E-01    &    --   \\
    &   $20\times20$    &  1.027E-03    &   3.698    &  1.751E-03    &   3.597    &  2.477E-02    &   2.331   \\
    &   $40\times40$    &  7.112E-05    &   3.852    &  1.252E-04    &   3.806    &  2.645E-03    &   3.227   \\
    &   $80\times80$    &  4.532E-06    &   3.972    &  8.284E-06    &   3.917    &  1.969E-04    &   3.748   \\
    &  $160\times160$    &  2.703E-07    &   4.067    &  5.206E-07    &   3.992    &  1.513E-05    &   3.702   \\
    &  $320\times320$    &  1.477E-08    &   4.194    &  3.045E-08    &   4.096    &  1.035E-06    &   3.870   \\\hline
\end{tabular}
\end{table}

\begin{table}[htb]\small
\caption{\label{tab_2d_SW_smooth_hv} $hv$'s numerical errors and orders
in Example \ref{examp7}. } \centering
\medskip
\begin{tabular}{|c|r||cc|cc|cc|}  \hline
 & $N$ & $L^1$ error & order & $L^2$ error & order & $L^\infty$ error & order \\ \hline \hline
 \multirow{6}{0.6cm}{$P^1$}
    &   $10\times10$    &  6.891E-01    &    --    &  9.023E-01    &    --    &  2.335E+00    &    --   \\
    &   $20\times20$    &  1.596E-01    &   2.110    &  2.392E-01    &   1.916    &  1.043E+00    &   1.163   \\
    &   $40\times40$    &  3.281E-02    &   2.282    &  5.205E-02    &   2.200    &  3.485E-01    &   1.581   \\
    &   $80\times80$    &  6.891E-03    &   2.251    &  1.091E-02    &   2.254    &  9.071E-02    &   1.942   \\
    &  $160\times160$    &  1.611E-03    &   2.097    &  2.535E-03    &   2.106    &  2.115E-02    &   2.100   \\
    &  $320\times320$    &  3.923E-04    &   2.038    &  6.213E-04    &   2.029    &  5.564E-03    &   1.927   \\\hline
 \multirow{6}{0.6cm}{$P^2$}
    &   $10\times10$    &  1.302E-01    &    --    &  2.193E-01    &    --    &  1.370E+00    &    --   \\
    &   $20\times20$    &  1.683E-02    &   2.952    &  3.687E-02    &   2.572    &  3.430E-01    &   1.998   \\
    &   $40\times40$    &  1.778E-03    &   3.243    &  4.268E-03    &   3.111    &  7.771E-02    &   2.142   \\
    &   $80\times80$    &  2.072E-04    &   3.101    &  5.186E-04    &   3.041    &  1.333E-02    &   2.544   \\
    &  $160\times160$    &  2.558E-05    &   3.018    &  6.440E-05    &   3.009    &  1.977E-03    &   2.753   \\
    &  $320\times320$    &  3.324E-06    &   2.944    &  8.107E-06    &   2.990    &  2.590E-04    &   2.932   \\ \hline
 \multirow{6}{0.6cm}{$P^3$}
     &   $10\times10$    &  3.380E-02    &    --    &  7.089E-02    &    --    &  5.048E-01    &    --   \\
    &   $20\times20$    &  2.265E-03    &   3.900    &  6.032E-03    &   3.555    &  1.060E-01    &   2.252   \\
    &   $40\times40$    &  1.501E-04    &   3.915    &  4.106E-04    &   3.877    &  1.369E-02    &   2.953   \\
    &   $80\times80$    &  9.436E-06    &   3.992    &  2.992E-05    &   3.778    &  1.212E-03    &   3.498   \\
    &  $160\times160$    &  5.554E-07    &   4.087    &  1.794E-06    &   4.060    &  7.502E-05    &   4.014   \\
    &  $320\times320$    &  3.287E-08    &   4.078    &  1.113E-07    &   4.010    &  4.624E-06    &   4.020   \\ \hline
\end{tabular}
\end{table}

\begin{exmp}\label{examp8}
In the last example, we test the ability of the proposed scheme to capture the perturbation of the still water equilibrium in 2D. This example is widely used to test the well-balanced schemes, which is given by LeVeque \cite{LeVeque1998}. The bottom function and the initial data are given by:
\begin{align*}
b(x,y)=0.8e^{-5(x-0.9)^2-50(y-0.5)^2}.
\end{align*}
\begin{align*}
\begin{aligned}
& h(x,y,0)=\left\{\begin{array}{ll}
1-b(x,y) + 0.01 & \text{ if ~~} 0.05 \leq x \leq 0.15\\
1-b(x,y) & \text{ otherwise }
\end{array}\right., \\
&  (hu)(x,y,0) = (hv)(x,y,0) = 0  \, ,
\end{aligned}
\end{align*}
where $(x,y)\in [0,2]\times[0,1]$.
\end{exmp}
We compute the numerical solutions on $200\times 100$ and $600\times 300$ uniform meshes respectively.
Numerical solutions at different time are presented for comparison in Figure \ref{fig_examp8}. These results demonstrate that our
schemes can resolve the complex small features without obvious spurious oscillations.

\begin{figure}[htb]
\centering
\caption{ \label{fig_examp8} The contours of the surface level $h+b$ for Example \ref{examp8}.
Left: results with a $200\times 100$ uniform mesh. Right: results with a $600\times 300$ uniform mesh.}
\subfigure{
\includegraphics[width=0.45\textwidth]{./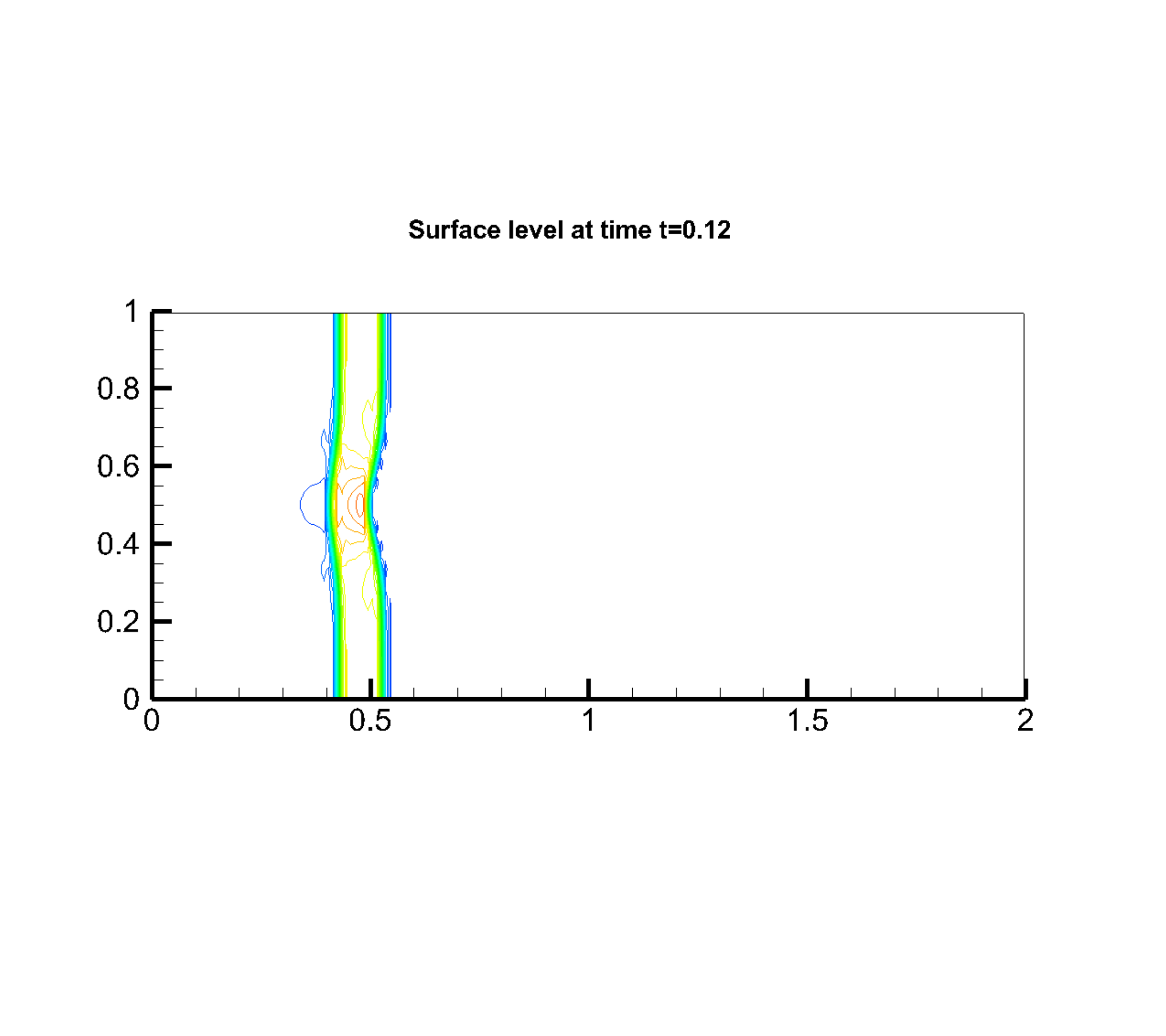} }
\subfigure{
\includegraphics[width=0.45\textwidth]{./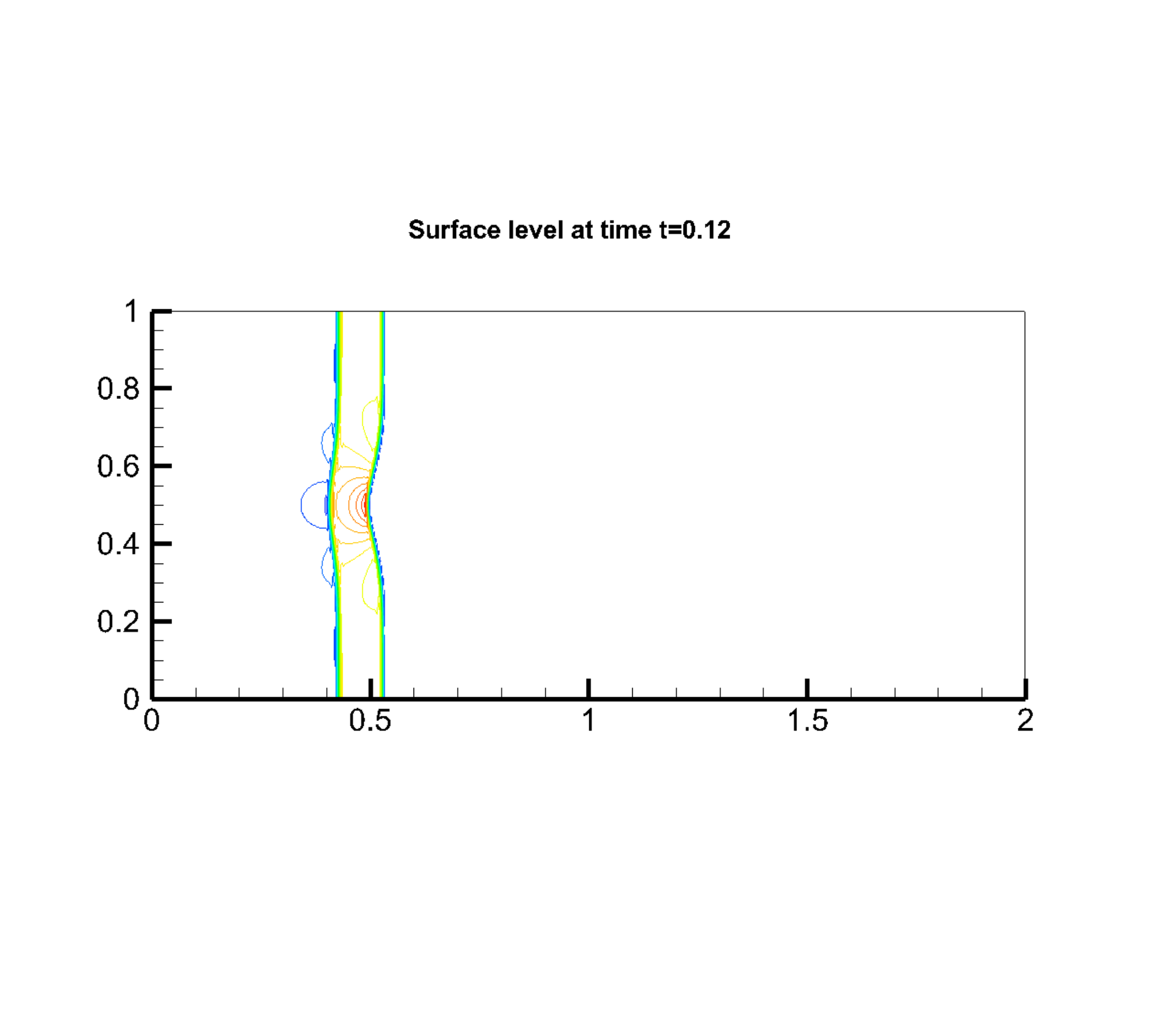} }\\
\vspace{-2.0cm}
\subfigure{
\includegraphics[width=0.45\textwidth]{./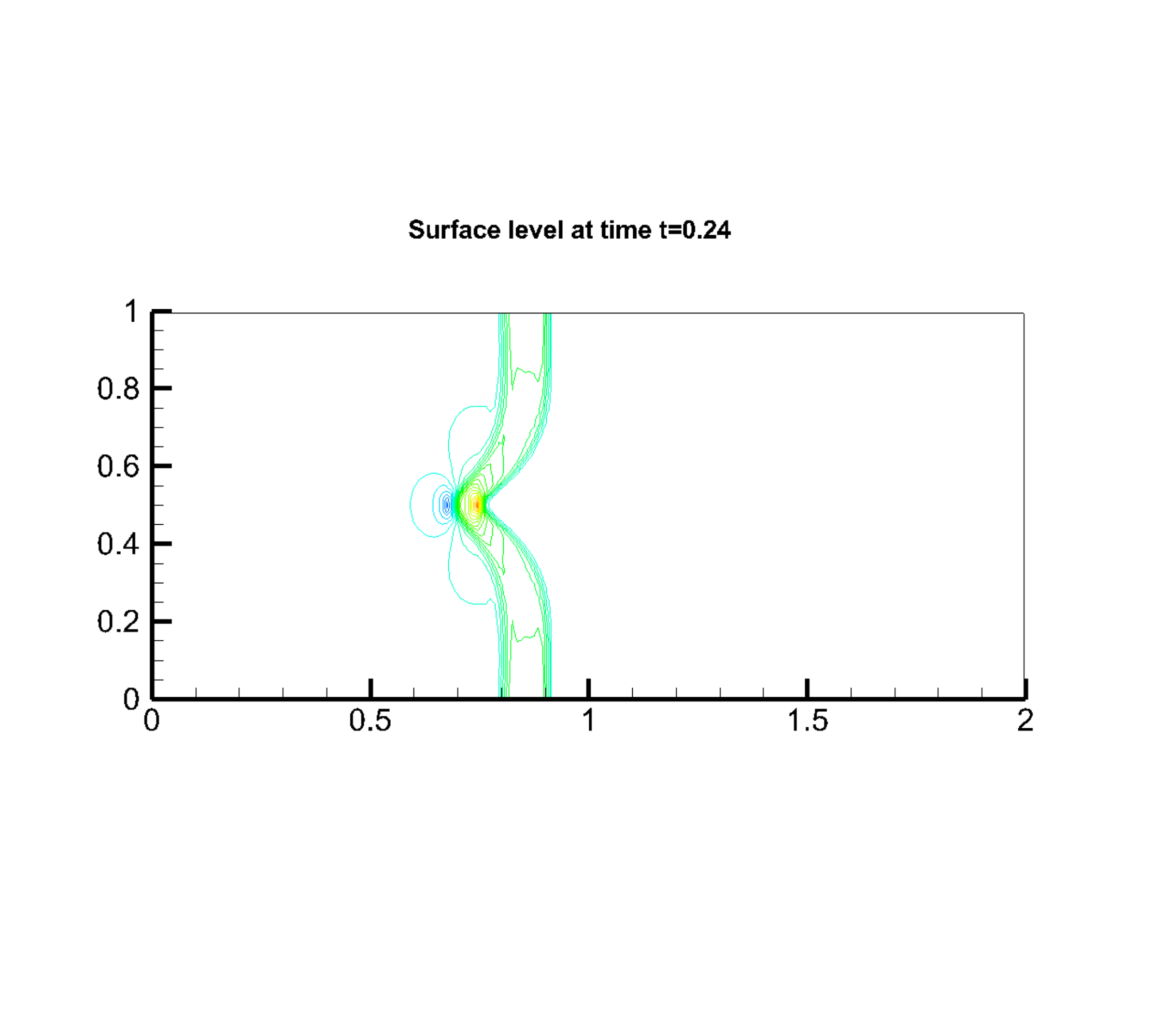} }
\subfigure{
\includegraphics[width=0.45\textwidth]{./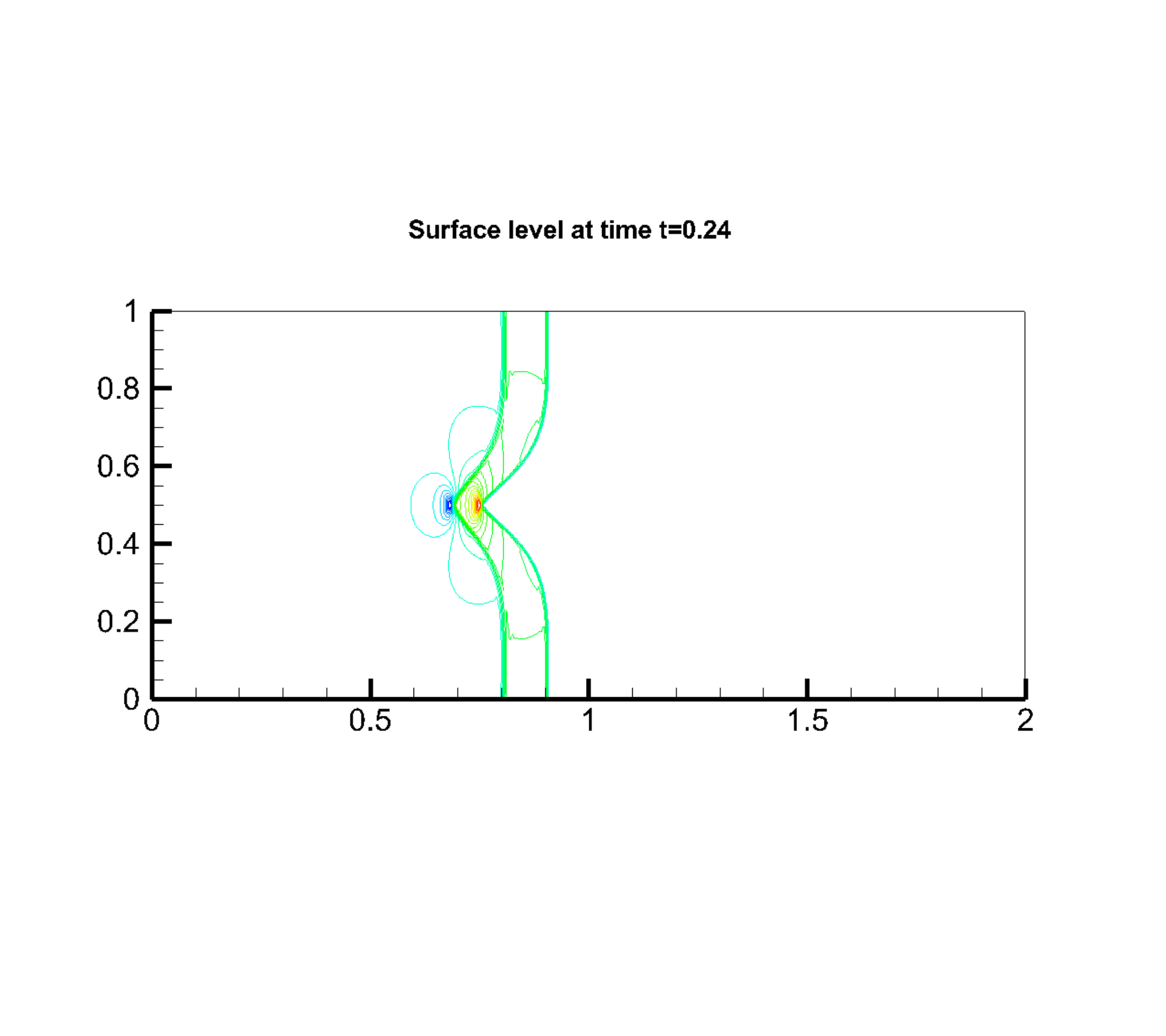} }\\
\vspace{-2.0cm}
\subfigure{
\includegraphics[width=0.45\textwidth]{./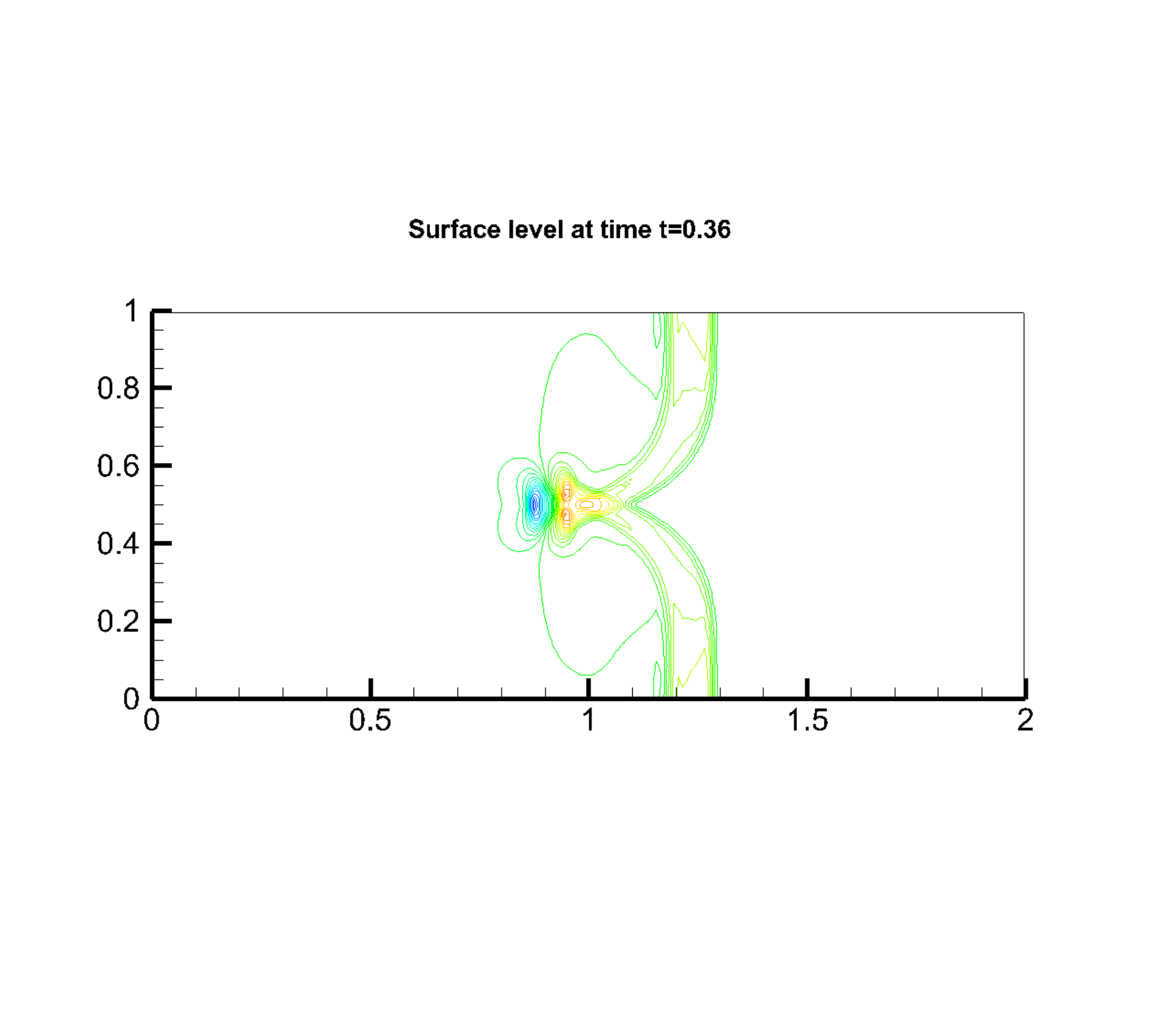} }
\subfigure{
\includegraphics[width=0.45\textwidth]{./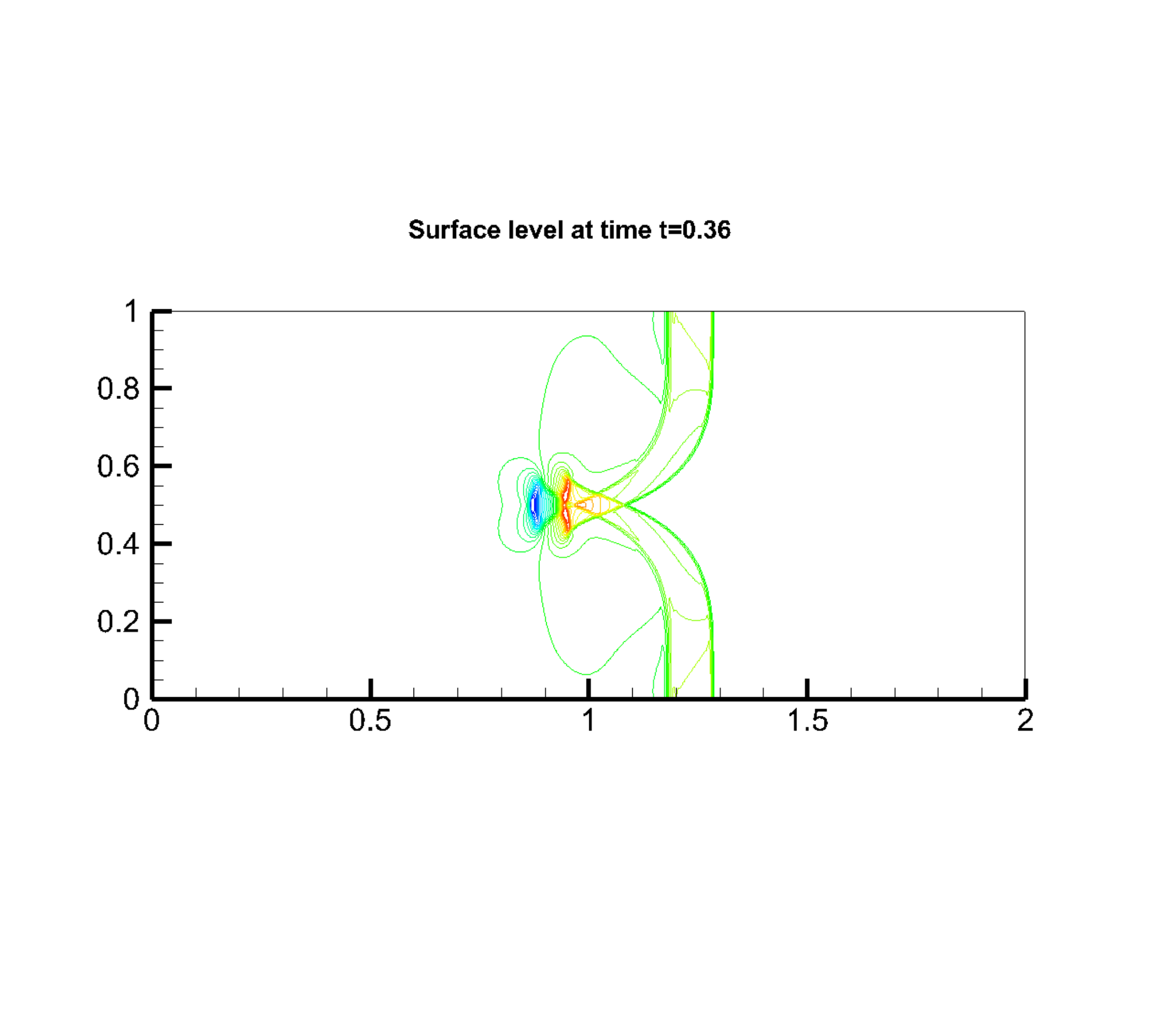} }\\
\vspace{-2.0cm}
\subfigure{
\includegraphics[width=0.45\textwidth]{./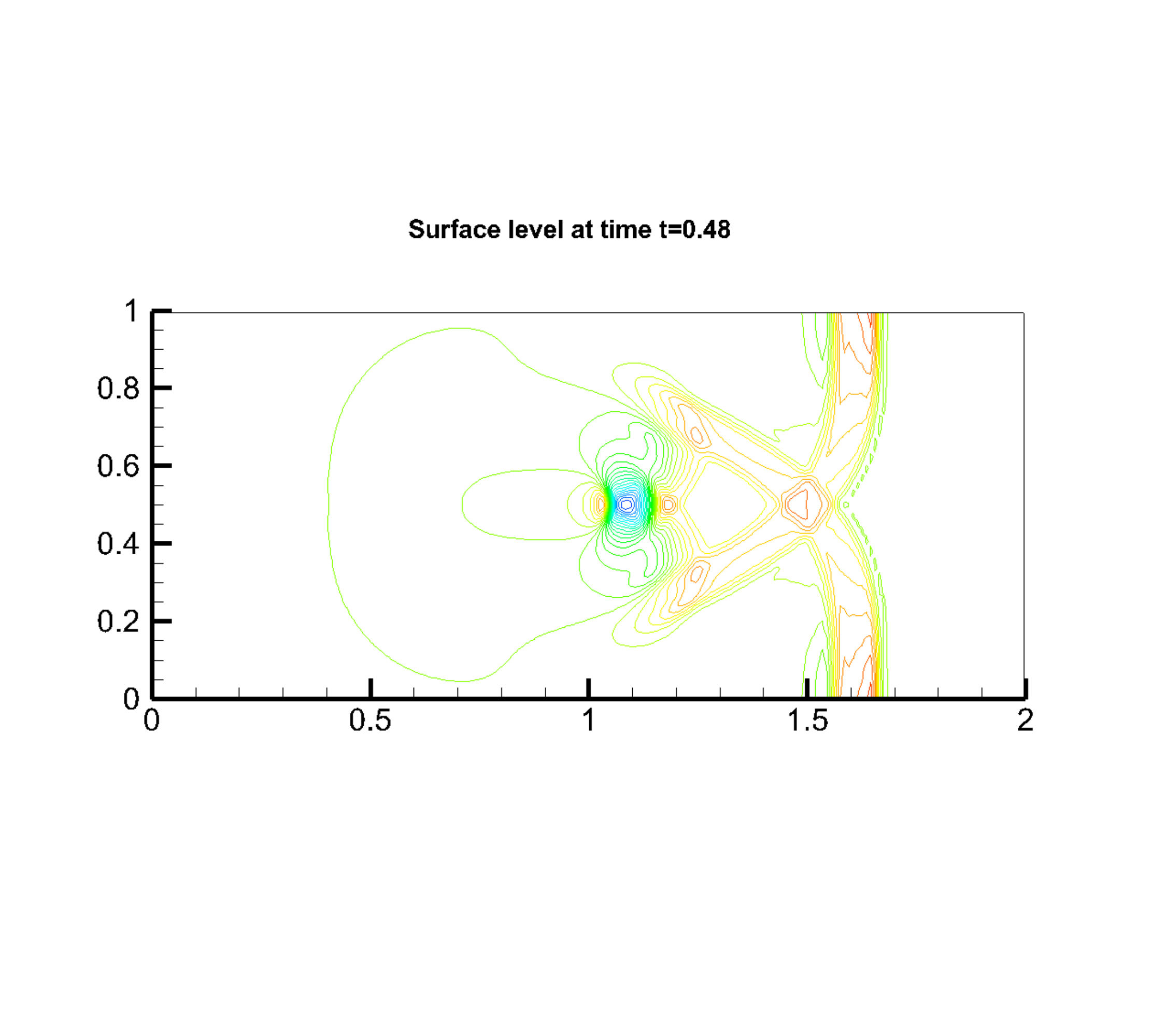} }
\subfigure{
\includegraphics[width=0.45\textwidth]{./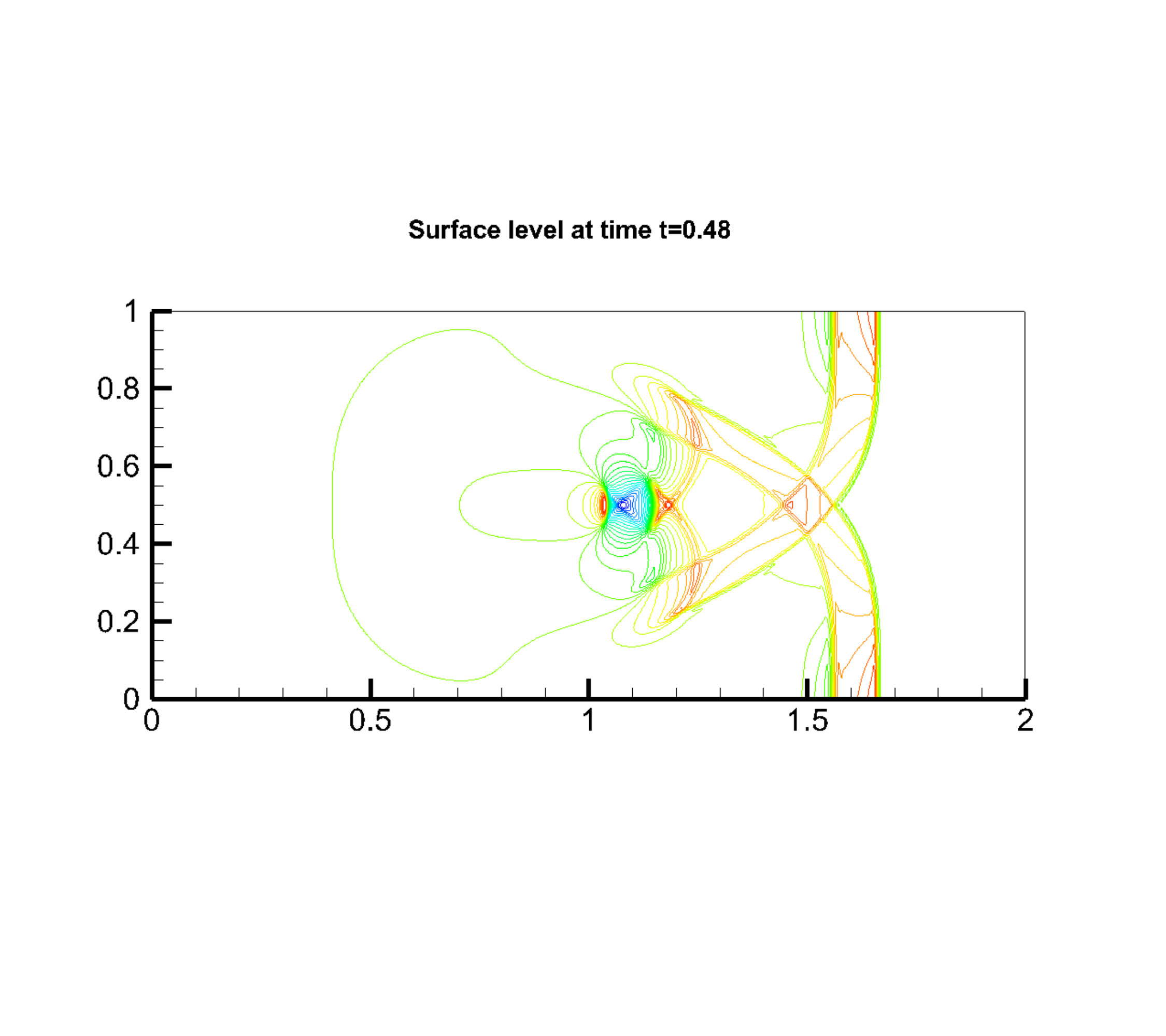} }\\
\vspace{-2.0cm}
\subfigure{
\includegraphics[width=0.45\textwidth]{./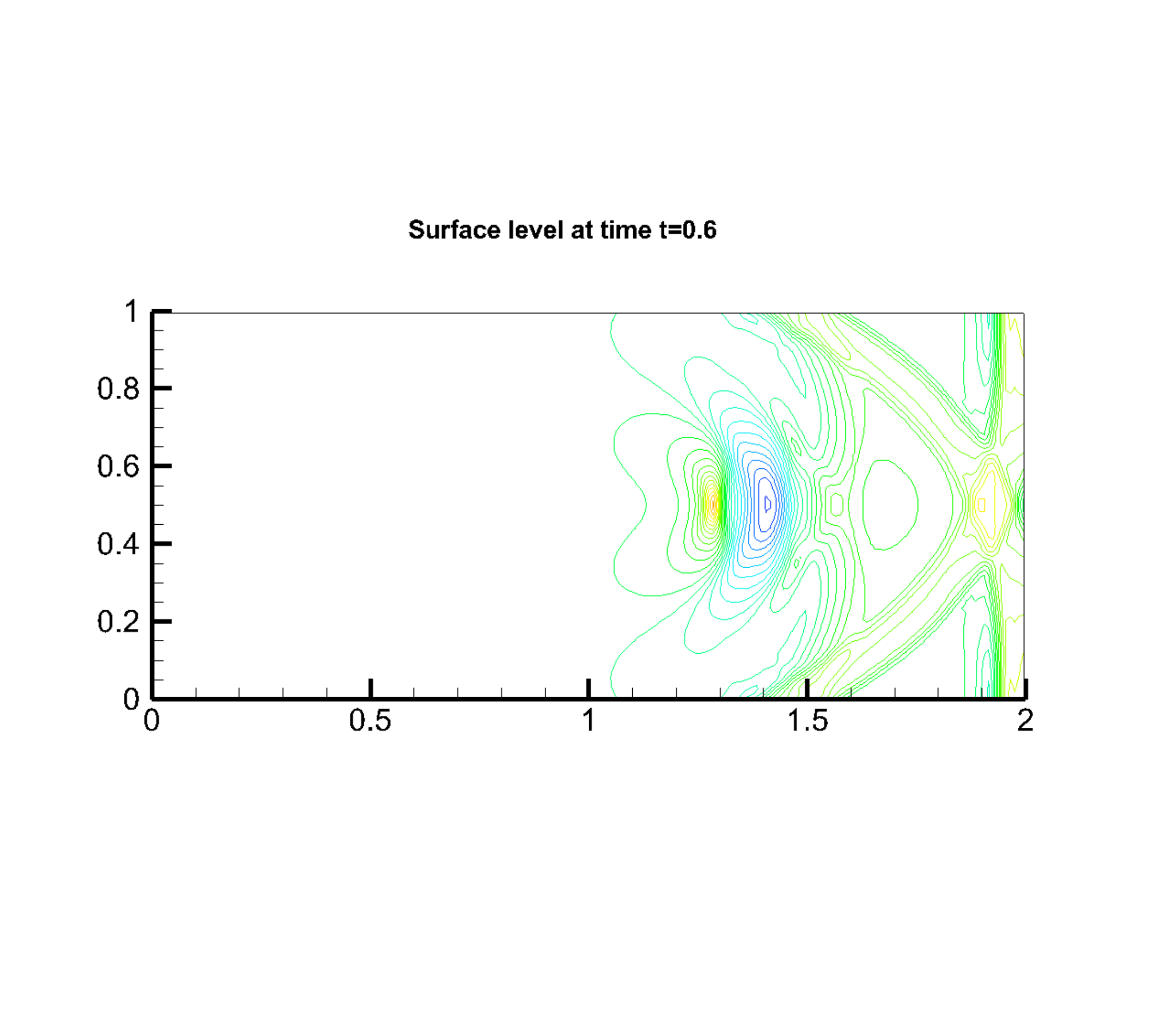} }
\subfigure{
\includegraphics[width=0.45\textwidth]{./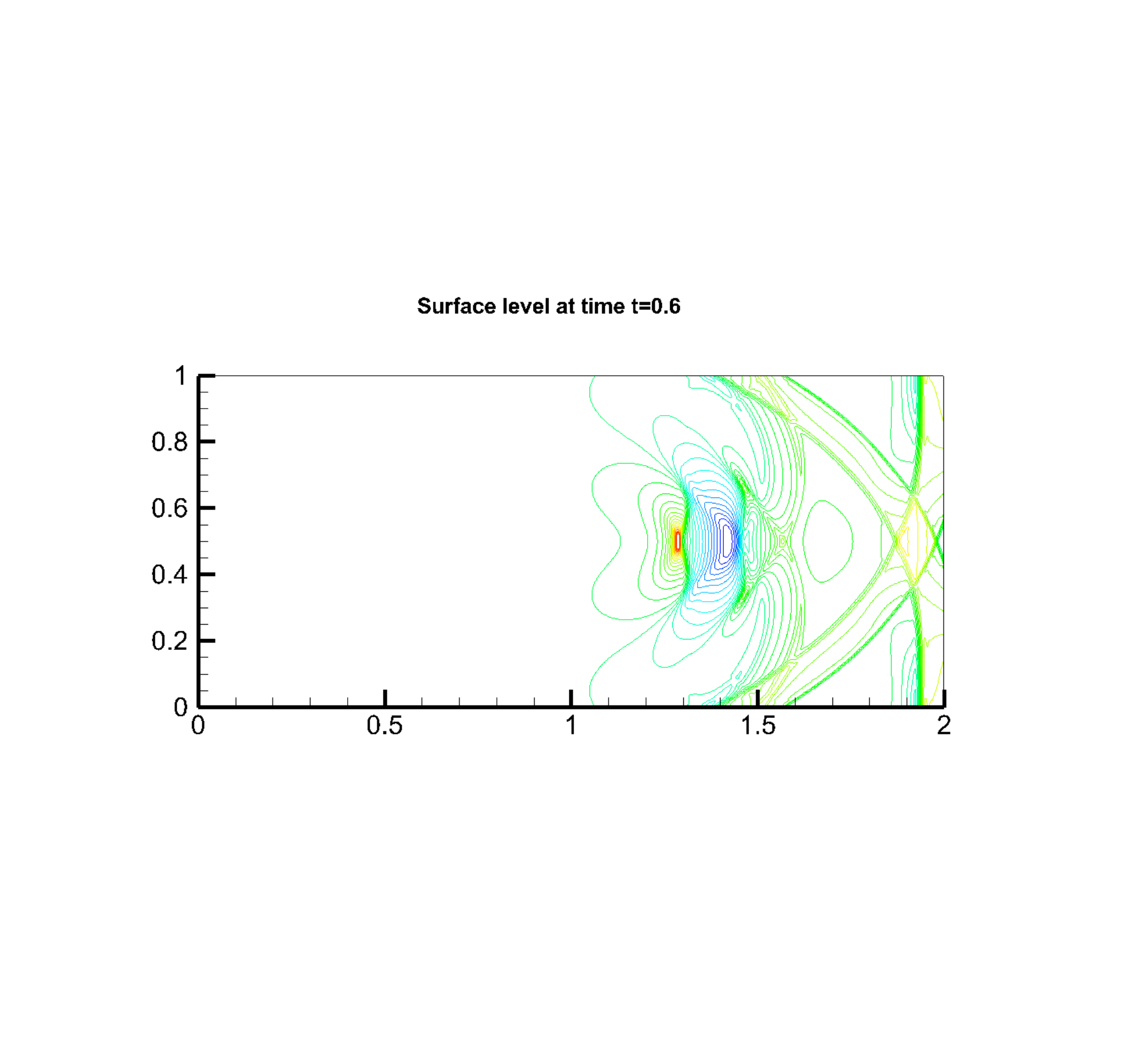} }
\end{figure}

\section{Concluding remarks}
\label{sec_sum}

In this paper, we developed a well-balanced oscillation-free discontinuous Galerkin (OFDG)
method for solving the shallow water equations.
Following the idea of the OFDG method in \cite{LLSSISC2021, LLSSINUM2021}, we added the
suitable extra damping terms to the existing well-balanced DG schemes proposed in
\cite{XSJCP2006, XSCICP2006}. The extra damping terms are carefully designed so as to
achieve the well-balanced property. It indicates the damping terms in the OFDG method
is very flexible and they can be consistent with other good properties with some
suitable modifications. The numerical experiments
validated the proposed method had well performances for several benchmark problems.
In our future plan, we will extend the current algorithm to the moving water steady state problems
and other well-balanced dynamics such as the hyperbolic model of chemosensitive
movement and the compressible Euler equations with gravitation, etc.


\begin{thebibliography}{99}

\bibitem{ABBKP2004}
E. Audusse, F. Bouchut, M.-O. Bristeau, R. Klein and B. Perthame,
\newblock{\em A fast and stable well-balanced scheme with hydrostatic reconstruction
for shallow water flows},
\newblock{ SIAM J. Sci. Comput. 25 (2004), 2050 -- 2065. }

\bibitem{BEKP2011}
S. Bryson, Y. Epshteyn, A. Kurganov and G. Petrova,
\newblock{\em Well-balanced positivity preserving central-upwind scheme on
triangular grids for the Saint-Venant system},
\newblock{ ESAIM Math. Model. Numer. Anal. 45 (2011), 423 -- 446. }

\bibitem{BLMR2002}
D.S. Bale, R.J. Leveque, S. Mitran and J.A. Rossmanith,
\newblock{\em A wave propagation method for conservation laws and balance laws
with spatially varying flux functions},
\newblock{SIAM J. Sci. Comput. 24 (2002), 955 -- 978. }

\bibitem{BKWD2009}
S. Bunya, E.J. Kubatko, J.J. Westerink and C. Dawson,
\newblock{\em A wetting and drying treatment for the Runge-Kutta discontinuous
Galerkin solution to the shallow water equations},
\newblock{ Comput. Methods Appl. Mech. Engrg. 198 (2009), 1548 -- 1562. }

\bibitem{Bokhove2005}
O. Bokhove,
\newblock{\em Flooding and drying in discontinuous Galerkin finite-element
discretizations of shallow-water equations. I. One dimension},
\newblock{J. Sci. Comput. 22/23 (2005), 47 -- 82. }

\bibitem{BR2000}
P.D. Bates and A.P.J. De Roo,
\newblock{\em A simple raster-based model for flood inundation simulation},
\newblock{J. Hydrol. 236 (2000), 54 -- 77.}

\bibitem{BSHG1995}
M.J. Briggs, C.E. Synolakis, G.S.Harkins and D.R. Green,
\newblock{ \em Laboratory experiments of tsunami runup on a circular island},
Pure Appl. Geophys. 144 (1995), 569 -- 593.

\bibitem{BV1994}
A. Bermudez and M.E. Vazquez,
\newblock{\em Upwind methods for hyperbolic conservation laws with source terms},
\newblock{ Comput. \& Fluids 23 (1994), 1049 -- 1071. }

\bibitem{CMP2007}
M.J. Castro, A. Pardo Milan\'es and C. Par\'es,
\newblock{\em Well-balanced numerical schemes based on a generalized hydrostatic
reconstruction technique},
\newblock{Math. Models Methods Appl. Sci. 17 (2007), 2055 -- 2113. }

\bibitem{FMT2011}
U.S. Fjordholm, S. Mishra and E. Tadmor,
\newblock{\em Well-balanced and energy stable schemes for the shallow water
equations with discontinuous topography},
\newblock{J. Comput. Phys. 230 (2011), 5587 -- 5609. }

\bibitem{GMFEM2019}
P. Garc\'{i}a-Navarro, J. Murillo, J. Fern\'{a}ndez-Pato, I. Echeverribar and
M. Morales-Hern\'{a}ndez,
\newblock{ \em The shallow water equations and their application to realistic cases},
\newblock{Environ. Fluid Mech. 19 (2019), 1235 -- 1252. }

\bibitem{Goutal1997}
N. Goutal and F. Maurel,
\newblock{Proceedings of the Second Workshop on Dam-Break Wave Simulation,}
\newblock{Techinical Report HE-43/97/016/A, Electricit\'{e} de France, D\'{e}partement
Laboratoire National d'Hydraulique, Group Hydraulique Fluviale, 1997.}

\bibitem{HCMK1999}
Y.-C. Hon, K.F. Cheung, X.-Z. Mao and E.J. Kansa,
\newblock{\em Multiquadric solution for shallow water equations},
\newblock{J. Hydraul. Eng. 125 (1999), 524 -- 533.}

\bibitem{KL2012}
G. Kesserwani and Q. Liang,
\newblock{\em Locally limited and fully conserved RKDG2 shallow water solutions
with wetting and drying},
\newblock{ J. Sci. Comput. 50 (2012), 120 -- 144. }


\bibitem{LeVeque1998}
R.J. LeVeque,
\newblock{\em Balancing source terms and flux gradients on high-resolution Godunov
methods: the quasi-steady wave propagation algorithm},
\newblock{J. Comput. Phys. 146 (1998), 346 -- 365.}

\bibitem{Li2017JSC}
M. Li, P. Guyenne, F. Li and L. Xu,
\newblock{A positivity-preserving well-balanced central discontinuous Galerkin method
for the nonlinear shallow water equations},
\newblock{J. Sci. Comput. 71 (2017), 994 -- 1034.}

\bibitem{LLSSISC2021}
Y. Liu, J. Lu and C.-W. Shu,
\newblock{\em  An oscillation-free discontinuous Galerkin method for hyperbolic systems}, \newblock{submitted, https://www.brown.edu/research/projects/scientific-computing/sites/brown.edu.research.projects.scientific-computing/files/uploads/AN OSCILLATION-FREE ISCONTINUOUS GALERKIN METHOD FOR HYPERBOLIC SYSTEMS.pdf}

\bibitem{LLSSINUM2021}
J. Lu, Y. Liu and C.-W. Shu,
\newblock{\em An oscillation-free discontinuous Galerkin method for scalar hyperbolic
conservation laws},
\newblock{SIAM J. Numer. Anal. 59 (2021), 1299 -- 1324.}

\bibitem{LMAdv2009}
Q. Liang and F. Marche,
\newblock{ \em Numerical resolution of well-balanced shallow water equations with
complex source terms.}
\newblock{ Adv. Water Resour. 32 (2009), 873 -- 884.}

\bibitem{PS2001}
B. Perthame and C. Simeoni,
\newblock{\em A kinetic scheme for the Saint-Venant system with a source term},
\newblock{ Calcolo 38 (2001), 201 -- 231. }

\bibitem{QCLP2018}
H. Qian, Z. Cao, H. Liu and G. Pender,
\newblock{\em New experimental dataset for partial dam-break floods over mobile beds},
\newblock{J. Hydraul. Res. 56 (201), 124 -- 135.}

\bibitem{Shu2009}
C.-W. Shu,
\newblock{\em Discontinuous Galerkin methods: General approach and stability.
Numerical solutions of partial differential equations},
\newblock{149 -- 201, Adv. Courses Math. CRM Barcelona, Birkh\"{a}user, Basel, 2009.}

\bibitem{Vazeuez1999}
M.E. Vazquez-Cendon,
\newblock{\em Improved treatment of source terms in upwind schemes for the shallow
water equations in channels with irregular geometry},
\newblock{ J. Comput. Phys. 148 (1999), 497 -- 526.}

\bibitem{WDGX2020}
X. Wen, W.-S. Don, Z. Gao and Y. Xing,
\newblock{\em Entropy stable and well-balanced discontinuous Galerkin methods
for the nonlinear shallow water equations},
\newblock{J. Sci. Comput. 83 (2020). }

\bibitem{XSJCP2005}
Y. Xing and C.-W. Shu,
\newblock{\em High order finite difference WENO schemes with the exact conservation
property for the shallow water equations},
\newblock{ J. Comput. Phys. 208 (2005), 206 -- 227.}

\bibitem{XSJCP2006}
Y. Xing and C.-W. Shu,
\newblock{\em High order well-balanced finite volume WENO schemes and discontinuous
 Galerkin methods for a class of hyperbolic systems with source terms},
\newblock{ J. Comput. Phys. 214 (2006), 567 -- 598.}

\bibitem{XSJSC2006}
Y. Xing and C.-W. Shu,
\newblock{\em High-order well-balanced finite difference WENO schemes for a class
of hyperbolic systems with source terms},
\newblock{J. Sci. Comput. 27 (2006), 477 -- 494. }

\bibitem{XSCICP2006}
Y. Xing and C.-W. Shu,
\newblock{\em A new approach of high order well-balanced finite volume WENO schemes
and discontinuous Galerkin methods for a class of hyperbolic systems with source terms},
\newblock{Commun. Comput. Phys. 1 (2006), 100 -- 134.}

\bibitem{XS2011}
Y. Xing and C.-W. Shu,
\newblock{\em High-order finite volume WENO schemes for the shallow water equations
with dry states},
\newblock{Adv. Water Resour. 34 (2011), 1026 -- 1038. }

\bibitem{XZ2013}
Y. Xing and X. Zhang,
\newblock{\em Positivity-preserving well-balanced discontinuous Galerkin methods for
the shallow water equations on unstructured triangular meshes},
\newblock{J. Sci. Comput. 57 (2013), 19 -- 41. }

\bibitem{XZS2010}
Y. Xing, X. Zhang and C.-W. Shu,
\newblock{\em Positivity-preserving high order well-balanced discontinuous Galerkin
methods for the shallow water equations},
\newblock{Adv. Water Resour. 33 (2010), 1476 -- 1493. }

\bibitem{XS2014}
Y. Xing and C.-W. Shu,
\newblock{\em A survey of high order schemes for the shallow water equations},
\newblock{J. Math. Study 47 (2014), 221 -- 249.}

\bibitem{ZCMI2001}
J.G. Zhou, D.M. Causon, C.G. Mingham and D.M. Ingram,
\newblock{\em The surface gradient method for the treatment of source terms in the
shallow-water equations},
\newblock{ J. Comput. Phys. 168 (2001), 1 -- 25. }

\bibitem{ZS2010_scalar}
X. Zhang and C.-W. Shu,
\newblock{\em On maximum-principle-satisfying high order schemes for scalar
conservation laws},
\newblock{ J. Comput. Phys. 229 (2010), 3091 -- 3120. }

\bibitem{ZS2010_system}
X. Zhang and C.-W. Shu,
\newblock{\em On positivity-preserving high order discontinuous Galerkin schemes
for compressible Euler equations on rectangular meshes},
\newblock{ J. Comput. Phys. 229 (2010), 8918 -- 8934. }



\end{thebibliography}
\end{document}